\newif\ifsiam
   \def\myqed{}
   \newtheorem{proposition}{Proposition}
   \theoremstyle{remark}
   \newtheorem{remark}{Remark}
   \def\myqed{\qedhere}
\let\div\undefined
\DeclareMathOperator{\diag}{diag}
\DeclareMathOperator{\div}{div}
\def\Hdiv{\bm{H}(\div)}
\def\T{\mathcal{T}}
\def\k{\kappa}
\def\tr{\mathsf{T}}
\def\Vh{\bm V_h}
\def\A{\mathsf{A}}
\def\B{\mathsf{B}}
\def\C{\mathsf{C}}
\def\D{\mathsf{D}}
\def\M{\mathsf{M}}
\def\W{\mathsf{W}}
\def\S{\mathsf{S}}
\def\I{\mathsf{I}}
\def\Zero{\mathsf{0}}
\def\L{\mathsf{L}}
\def\u{\boldsymbol{\mathsf{u}}}
\def\v{\boldsymbol{\mathsf{v}}}
\def\f{\boldsymbol{\mathsf{f}}}
\def\g{\mathsf{g}}
\def\p{\mathsf{p}}
\def\q{\mathsf{q}}
\def\r{\mathsf{r}}
\def\w{\mathsf{w}}
\def\x{\mathsf{x}}
\def\y{\mathsf{y}}
\def\AA{\mathcal{A}}
\def\BB{\mathcal{B}}
\def\xx{\mathcal{x}}
\newmdenv[fontcolor=red,linecolor=red,skipabove=12pt,skipbelow=12pt,
        innertopmargin=12pt,innerbottommargin=12pt]{note}
\newcommand{\hdr}[1]{\multicolumn{1}{c}{#1}}
\title{Matrix-free GPU-accelerated saddle-point solvers for high-order problems in $\Hdiv$}
\author{Will Pazner\thanks{Fariborz Maseeh Department of Mathematics and Statistics, Portland State University, Portland, OR (\email{pazner@pdx.edu}, \email{panayot@pdx.edu})}
\and Tzanio Kolev\thanks{Center for Applied Scientific Computing, Lawrence Livermore National Laboratory, Livermore, CA (\email{kolev1@llnl.gov})}
\and Panayot Vassilevski\footnotemark[1]}
\title[Matrix-free $\Hdiv$ saddle-point solvers]{Matrix-free GPU-accelerated saddle-point solvers for high-order problems in $\Hdiv$}
\author[Pazner]{Will Pazner$^1$}
\author[Kolev]{Tzanio Kolev$^2$}
\author[Vassilevski]{Panayot Vassilevski$^1$}
\address{$^1$Fariborz Maseeh Department of Mathematics and Statistics, Portland State University, Portland, OR}
\address{$^2$Center for Applied Scientific Computing, Lawrence Livermore National Laboratory, Livermore, CA}
\def\theabstract{
   This work describes the development of matrix-free GPU-accelerated solvers for high-order finite element problems in $\Hdiv$.
   The solvers are applicable to grad-div and Darcy problems in saddle-point formulation, and have applications in radiation diffusion and porous media flow problems, among others.
   Using the interpolation--histopolation basis (cf.~\abstractcite{Pazner2023}), efficient matrix-free preconditioners can be constructed for the $(1,1)$-block and Schur complement of the block system.
   With these approximations, block-preconditioned MINRES converges in a number of iterations that is independent of the mesh size and polynomial degree.
   The approximate Schur complement takes the form of an M-matrix graph Laplacian, and therefore can be well-preconditioned by highly scalable algebraic multigrid methods.
   High-performance GPU-accelerated algorithms for all components of the solution algorithm are developed, discussed, and benchmarked.
   Numerical results are presented on a number of challenging test cases, including the ``crooked pipe'' grad-div problem, the SPE10 reservoir modeling benchmark problem, and a nonlinear radiation diffusion test case.
}
\begin{document}

\ifsiam
   \maketitle
   \begin{abstract}
      \theabstract
   \end{abstract}
\else
   \begin{abstract}
      \theabstract
   \end{abstract}
   \maketitle
\fi

\section{Introduction}
\label{sec:intro}

This paper is concerned with the efficient solution of high-order finite element problems posed in the space $\Hdiv$;
in particular, we are interested in the grad--div problem
\begin{equation}
   \label{eq:grad-div}
   -\nabla (\alpha \nabla \cdot \bm u) + \beta \bm u = \bm f,
\end{equation}
and the closely related Darcy-type problem
\begin{equation}
   \label{eq:darcy}
   \left\{
   \begin{aligned}
      \bm u + \varepsilon \nabla p &= \bm f, \\
      \nabla \cdot \bm u + \gamma p &= g,
   \end{aligned}
   \right.
\end{equation}
where $\bm u \colon \Omega \to \mathbb{R}^d$ is a vector-valued unknown, $p \colon \Omega \to \mathbb{R}$ is a scalar unknown ($\Omega \subseteq \mathbb{R}^d$, $d \in \{2, 3\}$), and $\alpha, \beta, \gamma$ and $\varepsilon$ are potentially heterogeneous scalar coefficients.
Such problems arise in numerous applications, including, for example, radiation diffusion and porous media flow, among others \cite{Brunner2013,Boffi2013}.
We consider finite element discretizations for \eqref{eq:grad-div} and \eqref{eq:darcy}, where $\bm u$ is approximated in the Raviart--Thomas $\Hdiv$ finite element space, and $p$ is approximated in the discontinuous $L^2$ finite element space.

The solution of such problems via high-order finite element methods presents several advantageous features, as well as some challenges.
High-order methods promise high accuracy per degree of freedom, and have been demonstrated to achieve high performance on modern supercomputing architectures, in particular those based on graphics processing units (GPUs) and other accelerators \cite{Abdelfattah2021,Kolev2021a,LORGPU2022}.
However, the solution of the resulting large linear systems remains challenging for a number of reasons.
With increasing polynomial degree, the systems become both more ill-conditioned, and less sparse: the number of nonzeros in the matrix scales like $\mathcal{O}(p^{2d})$ and the condition number typically scales like $\mathcal{O}(p^4/h^2)$, where $p$ is the polynomial degree and $d$ is the spatial dimension \cite{Melenk2002}.
Consequently, it is necessary to develop effective matrix-free preconditioners that give rise to well-conditioned systems without having access to an assembled matrix.
There are a number of existing approaches for matrix-free preconditioning elliptic equations;
among these are $p$-multigrid, low-order-refined (also known as FEM--SEM) preconditioning, and hybrid Schwarz preconditioning \cite{Kronbichler2019,Pazner2020a,Lottes2005}.
However, the extension of these approaches to problems in $\Hdiv$ is not straightforward;
for example, multigrid-type methods with simple point smoothers are known not to perform well for such problems \cite{Arnold1997}.

Effective preconditioners for low-order $\Hdiv$ discretizations include multigrid with Schwarz smoothers \cite{Arnold2000}, auxiliary space algebraic multigrid preconditioning (e.g.\ the auxiliary divergence solver, ADS) \cite{Kolev2012}, and algebraic hybridization \cite{Dobrev2019}.
We briefly describe some existing approaches for extending these approaches to high-order discretizations.
In \cite{Pazner2023}, a low-order-refined approach for preconditioning grad--div problems in $\Hdiv$ was proposed.
In that work, the high-order operator discretized using the so-called interpolation--histopolation basis was shown to be spectrally equivalent to the lowest-order discretization posed on a refined mesh.
Then, an effective preconditioner for the low-order discretization (for example, the AMG-based ADS solver) can be used to precondition the high-order operator.
In \cite{Brubeck2022}, a vertex-star domain decomposition was combined with the use of a finite element basis with favorable sparsity properties to develop optimal-complexity multigrid solvers.

In the present work, we develop efficient matrix-free preconditioners for high-order $\Hdiv$ problems by considering the associated $2\times2$ saddle-point system.
Such saddle-point systems can be preconditioned effectively using block preconditioners \cite{Benzi2005,Zulehner2011,Pestana2015}.
The main challenge associated with developing such methods is the construction of spectrally equivalent approximations to the inverse of the Schur complement.
Using favorable properties of the interpolation--histopolation basis (cf.~\cite{Pazner2023}) it is possible to both enhance the sparsity of the block system, and to form sparse M-matrix approximations to the Schur complement that are effectively preconditioned with standard algebraic multigrid methods.
Although the block system is indefinite and contains more unknowns that the positive-definite system obtained by discretizing the grad--div problem directly, it is much less computationally expensive to precondition, resulting in significantly decreased cost per iteration, and faster overall solve time compared with alternative approaches.
Additionally, the described methods have low memory requirements, are highly scalable, and are amenable to GPU acceleration.

The structure of this paper is as follows.
In \Cref{sec:discretization} we describe the high-order finite element discretization and the associated linear systems.
This section also describes the construction of the block preconditioners, and includes results concerning the estimation of the condition number of the preconditioned system.
\Cref{sec:alg-gpu} describes the algorithmic details and GPU acceleration of the proposed methods.
Numerical examples are presented in \Cref{sec:results};
these include several challenging grad--div and Darcy benchmark problems, as well as a fully nonlinear time-dependent radiation-diffusion example.
We end with conclusions in \Cref{sec:conclusions}.

\section{Discretization and linear system}
\label{sec:discretization}

In the discrete setting, we consider a mesh $\T$ of the domain $\Omega$, where each element $\k \in \T$ is the image of the unit cube $[0,1]^d$ under a (typically isoparametric) transformation $T_\k$ (i.e., $\k = T_{\k}([0,1]^d)$).
In other words, we consider quadrilateral meshes in two spatial dimensions, and hexahedral meshes in three dimensions.
The variational problem associated with \eqref{eq:grad-div} is given by
\begin{equation}
   \label{eq:variational-problem}
   (\alpha \nabla \cdot \bm u, \nabla \cdot \bm v) + (\beta \bm u, \bm v) = (\bm f, \bm v).
\end{equation}
To discretize \eqref{eq:variational-problem}, we take $\bm u, \bm v \in \Vh$, where $\Vh$ is the degree-$p$ Raviart--Thomas finite element space,
\[
   \Vh = \{\, \bm{x} \in \Hdiv : \bm v|_\k \circ T_\k \in \Vh(\k) \,\}.
\]
The local space $\Vh(\k)$ is the image of the reference space $\Vh(\widehat{\k})$ under the $\Hdiv$ Piola transformation $\Vh(\k) = \det(J_\k)^{-1} J_\k \Vh(\widehat{\k})$, where $J_\k$ is the Jacobian of the element transformation $T_\k$.
In three dimensions, the reference space is given by $\Vh(\widehat{\k}) = \mathcal{Q}_{p,p-1,p-1} \times \mathcal{Q}_{p-1,p,p-1} \times \mathcal{Q}_{p-1,p-1,p}$, where $\mathcal{Q}_{p_1,p_2,p_3}$ is the space of trivariate polynomials of degree at most $p_i$ in variable $x_i$.
$\Vh(\widehat{\k})$ is defined analogously in two dimensions.

After discretization, the linear system of equations
\begin{equation}
   \label{eq:Au-eq-f}
   \A \u = \f
\end{equation}
is obtained, where $\A$ is the matrix given by the sum of the $\Hdiv$ stiffness and mass matrices, $\u$ is a vector of degrees of freedom representing $\bm u$, and $\f$ is a dual vector representing $\bm f$.

The grad--div problem \eqref{eq:grad-div} can be equivalently recast as the first-order system
\begin{equation}
   \label{eq:first-order-system}
   \left\{
   \begin{aligned}
      \beta \bm u - \nabla (\alpha q) &= \bm f, \\
      \nabla \cdot \bm u - q &= 0,
   \end{aligned}
   \right.
\end{equation}
leading to the symmetric variational problem
\begin{equation}
   \label{eq:block-variational}
   \begin{aligned}
      (\beta \bm u, \bm \sigma) + (\alpha q, \nabla \cdot \bm \sigma) &= (\bm f, \bm \sigma), \\
      (\alpha \nabla \cdot \bm u, \tau) - (\alpha q, \tau) &= 0,
   \end{aligned}
\end{equation}
where $\bm u, \bm \sigma \in \Vh$ and $q, \tau \in W_h$, where $W_h$ is the degree-$(p-1)$ $L^2$ finite element space
\[
   W_h = \{\, w \in L^2(\Omega) : w|_\k \circ T_\k \in \W_h(\k)  \,\},
\]
The local space $W_h(\kappa)$ is the image of the reference space $W_h(\widehat{\kappa}) = \mathcal{Q}_{p-1}$ under the integral-preserving mapping, $W_h(\kappa) = \det(J_{\kappa})^{-1} W_h(\widehat{\kappa})$.
Note that the space $W_h$ is equal to the range of divergence operator acting on $\Vh$

The variational system \eqref{eq:block-variational} can be written in matrix form as
\begin{equation}
   \label{eq:saddle-point}
   \begin{pmatrix}
      \M_\beta & \B_\alpha^\tr \\
      \B_\alpha & -\W_\alpha
   \end{pmatrix}
   \begin{pmatrix} \u \\ \q \end{pmatrix}
   =
   \begin{pmatrix} \f \\ \Zero \end{pmatrix},
\end{equation}
where the matrices $\M$, $\B$, and $\W$ are defined by
\begin{equation}
   \label{eq:matrices}
   \begin{aligned}
      \v^\tr \M_\beta \u &= (\beta \bm u, \bm v), \\
      \q^\tr \B_\alpha \u &= (\alpha \nabla \cdot \bm u, q), \\
      \r^\tr \W_\alpha \q &= (\alpha q, r).
   \end{aligned}
\end{equation}
It is straightforward to see that eliminating the scalar unknown $q$ from \eqref{eq:first-order-system} results in the original grad--div system \eqref{eq:grad-div}.
Likewise, the Schur complement of the saddle-point matrix with respect to the $(2,2)$-block $\M_\beta + \B_\alpha^\tr \W_\alpha^{-1} \B_\alpha$ is equal to the original $\Hdiv$ matrix $\A$.

Note that the block system \eqref{eq:saddle-point} is also closely related to the discretized system corresponding to the Darcy problem \eqref{eq:darcy}.
In the case of the mixed finite element problem, the resulting system takes the form
\begin{equation}
   \label{eq:darcy-nonzero}
   \begin{pmatrix}
      \M_{1/\varepsilon}  & \B^\tr \\
      \B  & -\W_\gamma
   \end{pmatrix}
   \begin{pmatrix} \u \\ \p \end{pmatrix}
   =
   \begin{pmatrix} \f \\ \g \end{pmatrix},
\end{equation}
where $\M_{1/\varepsilon}$ is the $\Hdiv$ mass matrix weighted by $\varepsilon^{-1}$, $\B$ corresponds to the unweighted divergence form, and $\W_\gamma$ is the $L^2$ mass matrix weighted by $\gamma$ (cf.~\eqref{eq:matrices}).
In contrast to the variational form corresponding to the grad-div problem, in this context the coefficient $\gamma$ may be zero (corresponding to the Poisson problem with no reaction term), leading to the saddle-point system with zero $(2,2)$-block
\begin{equation}
   \label{eq:darcy-zero}
   \begin{pmatrix}
      \M_{1/\varepsilon}  & \B^\tr \\
      \B  & \Zero
   \end{pmatrix}
   \begin{pmatrix} \u \\ \p \end{pmatrix}
   =
   \begin{pmatrix} \f \\ \g \end{pmatrix}.
\end{equation}

In what follows, we focus on the solution of the $2 \times 2$ saddle-point systems \eqref{eq:saddle-point} and \eqref{eq:darcy-nonzero}.
While the original grad-div system is symmetric and positive-definite, and so may be solved using the conjugate gradient method, the saddle-point system is symmetric and indefinite, and so we use the MINRES Krylov method (or GMRES, when using a non-symmetric preconditioner).
To precondition this system, we make use of block preconditioners developed in \Cref{sec:block-precond}.
The preconditioners for the grad-div saddle-point system \eqref{eq:saddle-point} apply equally well to the Darcy block systems \eqref{eq:darcy-nonzero} and \eqref{eq:darcy-zero}, including the case of zero $(2,2)$-block.

\subsection{Interpolation-histopolation basis}
\label{sec:basis}

In order to construct efficient preconditioners for the block systems, we will use properties of a particular choice of bases for the high-order Raviart--Thomas space $\Vh$ and $L^2$ space $W_h$.
These spaces will use the ``interpolation-histopolation'' basis (see \cite{Pazner2023} and also \cite{Dohrmann2021a,Kreeft2011,LORGPU2022}).
To define this basis, the $d$-dimensional reference element $[0,1]^d$ is subdivided into $p^d$ subelements (where $p$ is the polynomial degree) whose vertices are given by the $d$-fold Cartesian product of the $p+1$ Gauss--Lobatto points (see \Cref{fig:lor-cube}).
Then, the basis is induced by the following choice of degrees of freedom:
\begin{itemize}
   \item The degrees of freedom of the Raviart--Thomas space $\Vh$ are given by integrals of the normal component over each face of the subelement mesh. The basis functions are induced by the tensor-product of one-dimensional interpolation and histopolation operators.
   \item The degrees of freedom of the $L^2$ space $W_h$ are given by integrals over each subelement volume. The basis functions are induced by the tensor-product of one-dimensional histopolation operators.
\end{itemize}
\begin{figure}
   \centering
   \includegraphics[width=3cm]{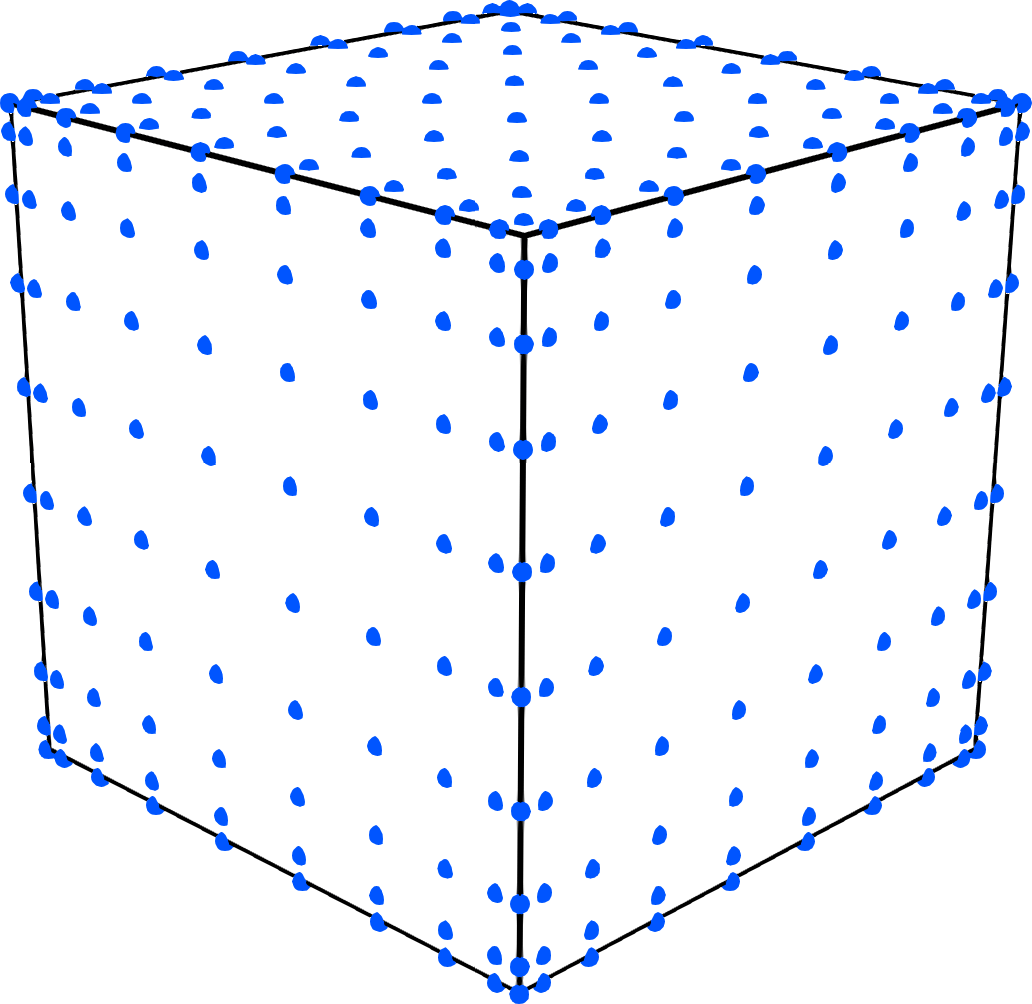}
   \hspace{1cm}
   \includegraphics[width=3cm]{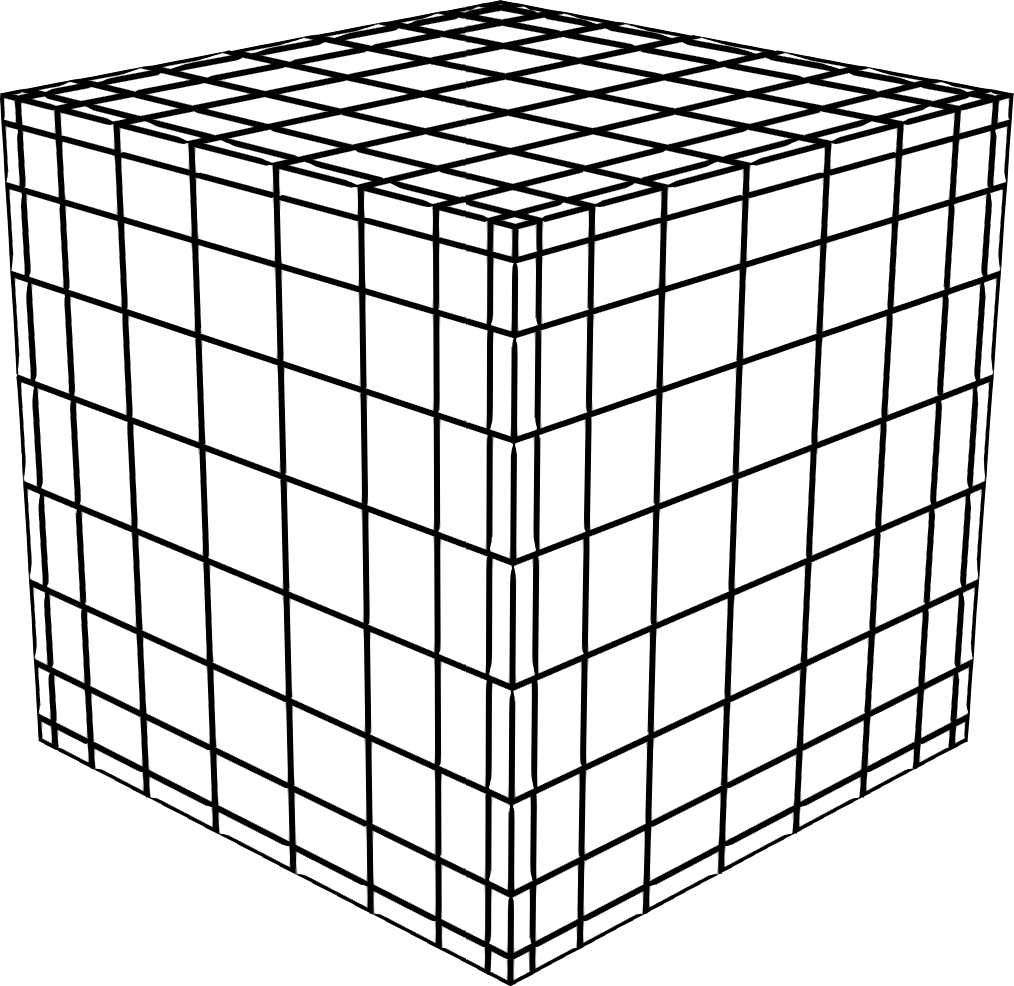}

   \caption{
      Left: high-order ($p=9$) hexahedral element with 10 Gauss--Lobatto nodes in each dimension.
      Right: subelement mesh with vertices at the Gauss--Lobatto points.
   }
   \label{fig:lor-cube}
\end{figure}
Given the above definitions, there is a natural identification of Raviart--Thomas degrees of freedom with subelement faces, and $L^2$ degrees of freedom with subelement volumes.
For our purposes, this choice of basis results in several important properties, the proofs of which can be found in \cite{Pazner2023}.
\begin{itemize}
   \item The Raviart--Thomas mass matrix $\M$ defined on $\Vh$ is spectrally equivalent to its diagonal, independent of mesh size and polynomial degree.
   \item The $L^2$ mass matrix $\W$ defined on $W_h$ is spectrally to its diagonal, independent of mesh size and polynomial degree.
   \item The discrete divergence matrix $\D : \Vh \to W_h$ is sparse, with two nonzeros per column (taking values $1$ and $-1$), regardless of polynomial degree. The matrix is entirely determined by the mesh topology (its entries are independent of mesh geometry or distortion), and it coincides exactly with the lowest-order discrete divergence matrix defined on the refined mesh (see the right panel of \Cref{fig:lor-cube}).
\end{itemize}

Because of the sparsity afforded by the discrete divergence matrix, it is advantageous to consider the transformed saddle-point matrix
\begin{equation}
   \label{eq:transformed-system}
   \AA :=
   \begin{pmatrix}
            \M_\beta & \D^\tr \\
            \D & -\W_\alpha^{-1}
   \end{pmatrix}
   =
   \begin{pmatrix}
      \I & \Zero \\
      \Zero & \W_\alpha^{-1}
   \end{pmatrix}
   \begin{pmatrix}
      \M_\beta & \B_\alpha^\tr \\
      \B_\alpha & -\W_\alpha
   \end{pmatrix}
   \begin{pmatrix}
      \I & \Zero \\
      \Zero & \W_\alpha^{-1}
   \end{pmatrix}
\end{equation}
using that $\D = \W_\alpha^{-1}\B_\alpha$ by the definitions in \eqref{eq:matrices}.
In contrast to the untransformed saddle-point system introduced in \eqref{eq:saddle-point}, the transformed system $\AA$ has much sparser off-diagonal blocks (whose sparsity, critically, is independent of the polynomial degree).
However, a downside of this transformation is that the matrix $\AA$ involves the \textit{inverse} of the $L^2$ mass matrix $\W_\alpha$.
In the context of an iterative solver such as MINRES, this means that the action of $\W_\alpha^{-1}$ will need to be computed at every iteration.
Because $W_h$ is a discontinuous space, $\W_\alpha$ is block-diagonal, and, at least for low orders, the blocks can be factorized efficiently element-by-element.
For higher orders, it is not feasible to factorize (or even assemble) the blocks of $\W_\alpha$, and so we turn to iterative matrix-free methods to apply the action of $\W_\alpha^{-1}$.
These methods are described in greater detail in \Cref{sec:dg-mass-inv}, and their performance is studied in the numerical results in \Cref{sec:dg-mass-gpu}.
The sparsity of the off-diagonal blocks and the diagonal spectral equivalence properties enumerated above allow for the sparse approximation of the Schur complement and efficient algebraic multigrid preconditioning, enabling the block preconditioning techniques described in the following section.

\subsection{Block preconditioners for saddle-point systems}
\label{sec:block-precond}

At this point, we make a brief digression to state some results concerning block preconditioning of $2 \times 2$ saddle-point systems.
We do not claim novelty of these results, however we collect them here in a form that will be convenient to reference later (see, e.g., \cite{Benzi2005,Zulehner2011,Pestana2015} for related results concerning block preconditioning of saddle-point systems).
Consider the saddle-point system
\begin{equation}
   \label{eq:generic-saddle-point}
   \AA = \begin{pmatrix}
      \A & \B^\tr \\
      \B & -\C
   \end{pmatrix}.
\end{equation}
While the particular saddle-point system of interest is that given by \eqref{eq:transformed-system}, in this section we only make the assumption that the blocks $\A$ and $\C$ in \eqref{eq:generic-saddle-point} are symmetric and positive-definite (but make no other assumptions about the particular form of the blocks).
The convergence of MINRES applied to this system can be estimated using bounds on the positive and negative parts of the spectrum,
\[
   \sigma(\AA) \subseteq [\mu_1, \mu_2] \cup [\nu_1, \nu_2]
   \quad \text{with} \quad
   \mu_1, \mu_2 < 0, \quad
   \nu_1, \nu_2 > 0.
\]
In particular, we are interested in the ratio $C/c$, where
\[
   C = |\lambda|_{\max} = \max \{ |\lambda| : \lambda \in \sigma(\AA) \}, \qquad
   c = |\lambda|_{\min} = \min \{ |\lambda| : \lambda \in \sigma(\AA) \},
\]
resulting in
\[
   c (\xx, \xx) \leq | (\AA\xx, \xx) | \leq C (\xx, \xx).
\]
Given a symmetric positive-definite preconditioner $\BB$, the relevant bounds are those that satisfy
\begin{equation}
   \label{eq:spectral-bounds}
   c (\BB \xx, \xx) \leq | (\AA\xx, \xx) | \leq C (\BB \xx, \xx),
\end{equation}
which can then be used to estimate the condition number, $\kappa(\BB^{-1}\AA) \leq C/c$.

\begin{proposition}
   \label{prop:diag-precond}
   Let $\BB$ be the block-diagonal preconditioner
   \[
      \BB = \begin{pmatrix}
         \A & \Zero \\
         \Zero & \S
      \end{pmatrix},
   \]
   where $\S = \C + \D \A^{-1} \D^\tr$ is the (negative) Schur complement of $\AA$ with respect to the $(1,1)$-block.
   This preconditioner is optimal in the sense that
   \[
      \sigma(\BB^{-1}\AA) \subseteq
         \left[-1, (1 - \sqrt{5})/2 \right) \cup
         \left[1,  (1 + \sqrt{5})/2 \right)
      \ \text{and}\ %
      \kappa(\BB^{-1}\AA) \leq \frac{\sqrt{5} + 1}{\sqrt{5} - 1} \approx 2.618\ldots
   \]
\end{proposition}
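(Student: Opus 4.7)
The plan is to analyze the generalized eigenvalue problem $\AA x = \lambda \BB x$ by passing to the symmetrically preconditioned matrix $\tilde{\AA} := \BB^{-1/2}\AA\BB^{-1/2}$, which is similar to $\BB^{-1}\AA$ and so has the same spectrum. Writing $\tilde{\AA}$ block-by-block with $\tilde{\B} := \S^{-1/2}\B\A^{-1/2}$ and $\tilde{\C} := \S^{-1/2}\C\S^{-1/2}$, the crucial identity to record is
\[
   \tilde{\B}\tilde{\B}^\tr + \tilde{\C} = \I,
\]
which follows immediately from the definition $\S = \C + \B\A^{-1}\B^\tr$ of the Schur complement. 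Since $\C$ is SPD and $\tilde{\B}\tilde{\B}^\tr$ is positive semidefinite, every eigenvalue $c$ of $\tilde{\C}$ must then lie in the half-open interval $(0, 1]$.

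Next I would write out the eigenvalue equations for $\tilde{\AA}$ applied to a block vector $(u, p)$. If $p = 0$, the first block row forces $\lambda = 1$ and $u \in \ker \tilde{\B}$. Otherwise, for $\lambda \neq 1$ the first row yields $u = (\lambda - 1)^{-1}\tilde{\B}^\tr p$, and substituting into the second row while using $\tilde{\B}\tilde{\B}^\tr = \I - \tilde{\C}$ produces
\[
   (1 + \lambda - \lambda^2)\, p = \lambda\, \tilde{\C}\, p.
\]
Decomposing $p$ in the eigenbasis of $\tilde{\C}$, each $\tilde{\C}$-eigenvector with eigenvalue $c$ then contributes two values of $\lambda$, namely the roots of
\[
   \lambda^2 + (c - 1)\lambda - 1 = 0,
   \qquad \lambda_\pm(c) = \tfrac{1}{2}\bigl((1 - c) \pm \sqrt{(1-c)^2 + 4}\bigr).
\]

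The final step is an elementary analysis of the two branches as $c$ varies over $(0, 1]$: a direct monotonicity check shows that $\lambda_+$ decreases from $(1+\sqrt{5})/2$ at $c = 0$ down to $1$ at $c = 1$, while $\lambda_-$ decreases from $(1-\sqrt{5})/2$ down to $-1$ over the same range. The strictly positive lower bound $c > 0$, guaranteed by the positive definiteness of $\C$, is what makes the endpoints $(1\pm\sqrt{5})/2$ never attained and hence open in the spectral inclusion. This yields $\sigma(\BB^{-1}\AA) \subseteq [-1, (1-\sqrt{5})/2) \cup [1, (1+\sqrt{5})/2)$, and the condition number bound $\kappa(\BB^{-1}\AA) \leq (1+\sqrt{5})/(\sqrt{5}-1)$ then follows by taking the ratio of the extremes of $|\lambda|$.

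The only real obstacle is the bookkeeping in the reduction step, specifically handling the degenerate cases $\lambda = 1$, $p = 0$ and $u = 0$ consistently, and being careful about the strict versus non-strict endpoint inclusions at $c = 0$ and $c = 1$. Once the identity $\tilde{\B}\tilde{\B}^\tr + \tilde{\C} = \I$ is in hand the algebra reduces to a single quadratic whose root structure is transparent, so the heart of the argument is really the choice of the Schur complement $\S$, which is precisely what makes the off-diagonal part $\tilde{\B}\tilde{\B}^\tr$ and the $(2,2)$-block $\tilde{\C}$ add to the identity.
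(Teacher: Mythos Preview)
Your proposal is correct and arrives at the same reduced equation as the paper, but extracts the spectral bounds by a different device. The paper works directly with the generalized eigenvalue problem, eliminates $\x$ to obtain
\[
   \bigl(\lambda\,\C + (\lambda^2 - \lambda - 1)\,\S\bigr)\y = 0,
\]
and then argues by signs: $\lambda$ and $\lambda^2-\lambda-1$ must have opposite signs (giving the golden-ratio endpoints), while the inequality $(\C\y,\y)\le(\S\y,\y)$ forces $\lambda^2\ge1$ or $\lambda^2\le1$ in the two cases. Your route---symmetrizing with $\BB^{-1/2}$, recording $\tilde{\B}\tilde{\B}^\tr+\tilde{\C}=\I$, and parametrizing the spectrum explicitly by the eigenvalues $c\in(0,1]$ of $\tilde{\C}$---yields the same quadratic $\lambda^2+(c-1)\lambda-1=0$ but then reads off the intervals via monotonicity of the two root branches $\lambda_\pm(c)$. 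The paper's sign argument is a little shorter; your parametrization is more informative, since it actually exhibits the full spectrum as the image of $(0,1]$ under $\lambda_\pm$ and makes transparent why the golden-ratio endpoints (corresponding to $c\to0$, i.e., $\C\to0$) are excluded while $\pm1$ (corresponding to $c=1$) may be attained. Your handling of the degenerate cases $p=0$ and $\lambda=1$ is also slightly more careful than the paper's.
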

\begin{proof}
   Let $\lambda$ be an eigenvalue of $\BB^{-1}\AA$, i.e.
   \begin{align*}
      \A \x + \B^\tr \y &= \lambda \A \x, \\
      \B \x - \C \y &= \lambda \S \y,
   \end{align*}
   for some $(\x, \y) \neq 0$.
   Multiplying the first equation by $\B\A^{-1}$ and eliminating $\x$, we obtain
   \begin{equation}
      \label{eq:lambda}
      \left( \lambda\C + (\lambda^2 - \lambda - 1) \S \right) \y = 0.
   \end{equation}
   Considering the case $\lambda \neq 1$, we have $\y \neq 0$ and so $(\S \y, \y) \geq (\C \y, \y) > 0$.
   Equation \eqref{eq:lambda} then implies that
   \[
      \lambda (\C \y, \y) + (\lambda^2 - \lambda - 1) (\S \y, \y) = 0,
   \]
   and so $\lambda$ and $(\lambda^2 - \lambda - 1)$ must have opposite signs.
   We consider the two cases:

   \begin{itemize}
      \item
         If $\lambda > 0$ then this implies $\lambda^2 - \lambda - 1 < 0$ and so $\lambda < (\sqrt{5} + 1) / 2$.
         On the other hand, $(\C\y, \y) \leq (\S\y, \y)$ implies $\lambda (\S\y, \y) + (\lambda^2 - \lambda - 1) (\S\y, \y) \geq 0$, and so $\lambda^2 \geq 1$, and $\lambda \in [1, (\sqrt{5} + 1)/2)$.
      \item
         If $\lambda < 0$ then $\lambda^2 - \lambda - 1 > 0$ and so  $\lambda < (1 - \sqrt{5})/2$.
         By the same reasoning as above, $\lambda^2 \leq 1$, so $\lambda \geq -1$, and $\lambda \in [-1, (1 - \sqrt{5})/2)$. \myqed
   \end{itemize}
\end{proof}

The condition number in \Cref{prop:diag-precond} can be improved slightly by using a different scaling of the blocks.

\begin{proposition}
   \label{prop:scaled-precond}
   The block-diagonal preconditioner
   \[
      \BB = \begin{pmatrix}
         2\A & \Zero \\
         \Zero & \S
      \end{pmatrix},
   \]
   is optimal in the sense that
   \[
      \sigma(\BB^{-1}\AA) \subseteq
         \left[-1, -1/2 \right) \cup
         \left[1/2, 1 \right)
      \qquad\text{and}\qquad
      \kappa(\BB^{-1}\AA) \leq 2.
   \]
   The constant 2 is the optimal relative scaling of $\A$ in terms of minimizing the bound on $\kappa(\BB^{-1} \AA)$.
\end{proposition}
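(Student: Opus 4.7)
The plan is to parallel the proof of Proposition 4.1, with the single change that eliminating $\x$ from the generalized eigenproblem $\AA(\x,\y)^\tr = \lambda \BB(\x,\y)^\tr$ now uses $(2\lambda-1)\A\x = \B^\tr\y$ rather than $(\lambda-1)\A\x = -\B^\tr\y$. Assuming $\lambda \neq 1/2$, I would solve for $\x$, substitute into the second block equation, apply $\B\A^{-1}\B^\tr = \S - \C$, and take the inner product with $\y$ to obtain the scalar identity
\[
2\lambda(\C\y,\y) + (2\lambda^2 - \lambda - 1)(\S\y,\y) = 0.
\]
The excluded value $\lambda = 1/2$ corresponds only to $\B^\tr\y = 0$ with $\x \in \ker\B$ and so lies in the claimed interval $[1/2,1)$.

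The case analysis then mirrors Proposition 4.1 exactly. For $\lambda > 0$, the factor $2\lambda^2 - \lambda - 1$ must be negative, which gives $\lambda < 1$; combining with $0 < (\C\y,\y) \leq (\S\y,\y)$ (which follows from $\S - \C = \B\A^{-1}\B^\tr \succeq 0$) yields $2\lambda + (2\lambda^2 - \lambda - 1) \geq 0$, i.e.\ $2\lambda^2 + \lambda - 1 \geq 0$, so $\lambda \geq 1/2$. The symmetric argument for $\lambda < 0$ gives $\lambda \in [-1, -1/2)$, and together these establish $\kappa(\BB^{-1}\AA) \leq 2$.

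For the optimality claim I would replace $2\A$ by $\alpha\A$ and redo the elimination, producing the parametric quadratic $\alpha\lambda^2 + (\alpha r - 1)\lambda - 1 = 0$ with $r := (\C\y,\y)/(\S\y,\y) \in (0,1]$, whose roots are $\lambda_\pm(r,\alpha) = \bigl(1 - \alpha r \pm \sqrt{(\alpha r - 1)^2 + 4\alpha}\bigr)/(2\alpha)$. Both branches are monotone in $r$, so the extreme magnitudes over $r \in (0,1]$ are attained at the endpoints: at $r = 1$ one has $\lambda_\pm \in \{1/\alpha, -1\}$, while as $r \to 0^+$ one has $\lambda_+ \to f(\alpha) := (1 + \sqrt{1+4\alpha})/(2\alpha)$ and $\lambda_- \to -g(\alpha) := -(\sqrt{1+4\alpha} - 1)/(2\alpha)$. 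The algebraic identity $f(\alpha)\,g(\alpha) = 1/\alpha$ couples the two branches, and a short case split (positive branch dominates when $\alpha \leq 2$, negative branch dominates when $\alpha \geq 2$) shows that the condition-number ratio equals $(1+\sqrt{1+4\alpha})/(\sqrt{1+4\alpha}-1)$ on $(0,2]$ and $\alpha$ on $[2,\infty)$, with a unique minimum of $2$ at $\alpha = 2$. The hard part will be this case split: the roles of the two branches in determining $C$ and $c$ swap precisely at the optimum, so one must combine the monotonicity of $\lambda_\pm$ with the identity $fg = 1/\alpha$ to treat both regimes cleanly.
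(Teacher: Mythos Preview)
Your proposal is correct and follows essentially the same approach as the paper: eliminate $\x$ from the generalized eigenproblem, reduce to a scalar relation between $(\C\y,\y)$ and $(\S\y,\y)$, and use $0 < (\C\y,\y) \leq (\S\y,\y)$ to confine $\lambda$ between the roots of the two extremal quadratics, then compare these four roots as the scaling varies. The only organizational difference is that the paper works with a general scaling $\tau$ from the outset and phrases the bounds as ``the intervals where $\tau\lambda^2 - \lambda - 1$ and $\tau\lambda^2 + (\tau-1)\lambda - 1$ have opposite signs,'' whereas you first treat $\tau=2$ in the style of Proposition~\ref{prop:diag-precond} and then introduce the explicit parameter $r = (\C\y,\y)/(\S\y,\y)\in(0,1]$ for the optimality argument; your identity $f(\alpha)g(\alpha)=1/\alpha$ is just Vieta's formula for the $r\to 0$ quadratic and leads to the same three-case comparison the paper carries out.
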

\begin{proof}
   Let $\tau > 0$ be a scaling constant, and consider $\BB$ given by
   \[
      \BB = \begin{pmatrix}
         \tau \A & \Zero \\
         \Zero & \S
      \end{pmatrix}.
   \]
   Repeating the argument from the proof of \Cref{prop:diag-precond}, we see that either $\lambda = 1/\tau$ or
   \[
      \left( \tau \lambda \C + (\tau \lambda^2 - \lambda - 1) \S \right) \y = 0.
   \]
   Therefore, the eigenvalues are contained in the intervals where the quadratic polynomials $\tau \lambda^2 - \lambda - 1$ and $\tau \lambda^2 + (\tau - 1)\lambda - 1$ have opposite signs.
   The roots of these polynomials are $\lambda_1 = (1 - \sqrt{1 + 4\tau})/(2\tau)$, $\lambda_2 = (1 + \sqrt{1 + 4 \tau})/(2 \tau)$, $\lambda_1^* = 1/\tau$, and $\lambda_2^* = -1$, respectively.
   Note that $\lambda_2^* < \lambda_1 < 0 < \lambda_1^* < \lambda_2$, and so
   \[
      \kappa(\BB^{-1} \AA) \leq
         \frac{
            \max(|\lambda_2|, |\lambda_2^*|)
         }{
            \min(|\lambda_1|, |\lambda_1^*|)
         }.
   \]
   We consider three cases depending on the value of $\tau$:
   \begin{itemize}
      \item $\tau < 2$. Then, $|\lambda_2| > |\lambda_2^*|$ and $|\lambda_1| < |\lambda_1^*|$, resulting in the bound
      \[
         \kappa(\BB^{-1}\AA) \leq \frac{|\lambda_2|}{|\lambda_1|}
         = \frac{|1 + \sqrt{1 + 4\tau}|}{|1 - \sqrt{1+4\tau}|} > 2.
      \]
      \item $\tau > 2$. Then, $|\lambda_2| < |\lambda_2^*|$ and $|\lambda_1| > |\lambda_1^*|$, resulting in the bound
      \[
         \kappa(\BB^{-1}\AA) \leq \frac{|\lambda_2^*|}{|\lambda_1^*|}
         = \frac{1}{1/\tau} > 2.
      \]
      \item $\tau = 2$. Then $|\lambda_2| = |\lambda_2^*| = 1$ and $|\lambda_1| = |\lambda_1^*| = 1/2$, resulting in the best bound among these three cases,
      \[
         \kappa(\BB^{-1}\AA) \leq \frac{|\lambda_2|}{|\lambda_1|} = \frac{|\lambda_2^*|}{|\lambda_1^*|} = 2. \myqed
      \]
   \end{itemize}
\end{proof}

In the above, we are assuming that the applying the preconditioner $\BB$ involves exactly inverting the matrix $\A$ and Schur complement $\S$.
In practice this is typically infeasible, and so the inverses of these blocks are themselves approximated by spectrally equivalent preconditioners.
The following proposition shows that this does not impact the optimality of the preconditioners.

\begin{proposition}
   Given a saddle-point system $\AA$ and preconditioner $\BB$, suppose that the inequalities \eqref{eq:spectral-bounds} hold with constants $c$ and $C$.
   Suppose further that $\widetilde{\BB}$ is another preconditioner, spectrally equivalent to $\BB$, i.e.
   \begin{equation}
      \label{eq:spec-equiv-precond}
      \tilde{c} (\widetilde{\BB} \xx, \xx) \leq (\BB \xx, \xx) \leq \tilde{C} (\widetilde{\BB} \xx, \xx).
   \end{equation}
   Then, $\widetilde{\BB}^{-1} \AA$ is uniformly well-conditioned, with condition number bound
   \[
      \kappa(\widetilde{\BB}^{-1} \AA) \leq C \tilde{C} / (c \tilde{c}).
   \]
\end{proposition}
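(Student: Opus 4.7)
The plan is to simply chain the two given pairs of spectral inequalities and then invoke the same implication, used in the preceding propositions, that a two-sided bound of $|(\AA\xx,\xx)|$ in terms of a symmetric positive-definite form translates into the stated bound on $\kappa(\widetilde{\BB}^{-1}\AA)$. No new eigenvalue analysis is needed since all the structural work has already been done at the level of \Cref{prop:diag-precond,prop:scaled-precond}.

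First I would write down the hypothesis \eqref{eq:spectral-bounds} for the preconditioner $\BB$, namely $c(\BB\xx,\xx) \leq |(\AA\xx,\xx)| \leq C(\BB\xx,\xx)$, and the spectral equivalence \eqref{eq:spec-equiv-precond}, which after rearranging reads $\tilde{c}(\widetilde{\BB}\xx,\xx) \leq (\BB\xx,\xx) \leq \tilde{C}(\widetilde{\BB}\xx,\xx)$ for all $\xx$. Because $(\BB\xx,\xx)$ and $(\widetilde{\BB}\xx,\xx)$ are nonnegative (both $\BB$ and $\widetilde{\BB}$ are symmetric positive-definite), these inequalities compose without any sign issues.

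Next I would combine them directly: multiplying the lower bound in \eqref{eq:spec-equiv-precond} by $c$ and the upper bound by $C$, and inserting into \eqref{eq:spectral-bounds}, yields
\[
   c\tilde{c}\,(\widetilde{\BB}\xx,\xx) \;\leq\; c\,(\BB\xx,\xx) \;\leq\; |(\AA\xx,\xx)| \;\leq\; C\,(\BB\xx,\xx) \;\leq\; C\tilde{C}\,(\widetilde{\BB}\xx,\xx),
\]
valid for every $\xx$. This is exactly the form \eqref{eq:spectral-bounds} with $\BB$ replaced by $\widetilde{\BB}$ and constants $c\tilde{c}$ and $C\tilde{C}$.

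Finally, I would conclude by applying the same reasoning used in \Cref{prop:diag-precond} that translates such two-sided bounds into a condition number estimate: the eigenvalues $\lambda$ of $\widetilde{\BB}^{-1}\AA$ satisfy $c\tilde{c} \leq |\lambda| \leq C\tilde{C}$, giving $\kappa(\widetilde{\BB}^{-1}\AA) \leq C\tilde{C}/(c\tilde{c})$. There is no real obstacle here; the only thing to be careful about is ensuring that the spectral equivalence \eqref{eq:spec-equiv-precond} is applied in the correct direction on each side of \eqref{eq:spectral-bounds} so that the lower bound on $|(\AA\xx,\xx)|$ receives $\tilde{c}$ and the upper bound receives $\tilde{C}$.
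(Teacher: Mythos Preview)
Your proposal is correct and follows exactly the paper's approach: the paper's proof is a single sentence stating that the result follows immediately by combining \eqref{eq:spectral-bounds} and \eqref{eq:spec-equiv-precond}, which is precisely the chaining of inequalities you carry out in detail.
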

\begin{proof}
   This follows immediately by combining \eqref{eq:spectral-bounds} and \eqref{eq:spec-equiv-precond}.
\end{proof}

\begin{remark}
   As a consequence of the above results, a block-diagonal preconditioner of the form
   \[
      \BB = \begin{pmatrix}
         \tau \widetilde{\M} & \Zero \\
         \Zero & \widetilde{\S}
      \end{pmatrix},
   \]
   where $\widetilde{\M}$ is spectrally equivalent to $\M$ and $\widetilde{\S}$ is spectrally equivalent to the Schur complement $\S$, will result optimal (i.e.\ discretization parameter independent) convergence for MINRES when applied to the saddle-point system $\AA$.
   The relative scaling of the blocks determined by the coefficient $\tau$ should be chosen in accordance with \Cref{prop:scaled-precond}
\end{remark}

\begin{remark}
   Block-triangular preconditioners of the form
   \[
      \label{eq:block-triangular-exact}
      \BB = \begin{pmatrix}
         \M & \B^\tr \\
         \Zero & \S
      \end{pmatrix}
   \]
   result in a preconditioned system with exactly one eigenvalue, $\sigma(\BB^{-1}\AA) = \{ 1 \}$, and whose minimal polynomial has degree two, guaranteeing convergence of GMRES in at most two iterations \cite{Benzi2005}.
   In practice, the diagonal blocks must be replaced with spectrally equivalent approximations $\widetilde{\M}$ and $\widetilde{\S}$.
   In this setting, convergence estimates for GMRES may be obtained using a field of values analysis, where the resulting bounds depend on the inf-sup constant of the corresponding finite element problem \cite{Aulisa2018}.
   However, because the triangular preconditioners are non-symmetric, GMRES instead of MINRES must be used, increasing the memory requirements.
   Whether the reduced number of iterations warrants sacrificing the short-term recurrence is typically problem-specific;
   in the numerical results we consider only the block-diagonal preconditioner.
\end{remark}

\begin{remark}
   Using the so-called Bramble--Pasciak (BP) transformation, it is possible to use a block-triangular preconditioner with the conjugate gradient method in a transformed inner product \cite{Bramble1988}.
   This inner product requires a scaled approximation of the $(1,1)$-block of the matrix, typically necessitating the solution of an eigenvalue problem with the matrix $\widetilde{\M}^{-1}\M$.
   The total work per iteration of the BP--CG method is similar to that of block-diagonally preconditioned MINRES.
   Preliminary investigations into the performance of the BP--CG method applied to the problems considered presently indicate largely similar performance to that of MINRES with block-diagonal preconditioning (but with the added cost of estimating the extremal eigenvalue of $\widetilde{\M}^{-1}\M$).
\end{remark}

\subsection{Application to the grad-div system}
\label{sec:grad-div-precond}

We now apply the block preconditioning techniques developed in \Cref{sec:block-precond} to the mixed ($\Hdiv$ and $L^2$) saddle-point system $\AA$ in the form given by \eqref{eq:transformed-system}.
Using the properties of the interpolation-histopolation basis (see \Cref{sec:basis}), the $\Hdiv$ mass matrix $\M_\beta$ is spectrally equivalent to its diagonal, and so we can take $\widetilde{\M} = \diag(\M_\beta)$.
We now turn to approximating the Schur complement $\S = \W_\alpha + \D \M_\beta^{-1} \D^\tr$.
Using the spectral equivalence of $\W_\alpha$ to its diagonal, we take $\widetilde{\W} = \diag(\W_\alpha)$, and then define
\begin{equation}
   \label{eq:approx-schur}
   \widetilde{\S} = \widetilde{\W}^{-1} + \D \widetilde{\M}^{-1} \D^\tr.
\end{equation}
Because of the properties of the basis, the matrix $\widetilde{\S}$ takes a particularly simple structure.
Each degree of freedom in the space $\Vh$ corresponds to a \textit{subelement face}, and each degree of freedom in $W_h$ corresponds to a \textit{subelement volume}.
The connectivity pattern of the $\widetilde{\S}$ is determined by the topological connectivity of the subelement mesh.
In particular, let $\mathcal{F}(i)$ denote the set of Raviart--Thomas degrees of freedom $k$ corresponding to subelement faces belonging to the subelement volume $i$.
Given two subelement volumes (i.e., two $L^2$ degrees of freedom) sharing a common face, let $k_{ij}$ denote the degree of freedom corresponding to the shared face.
Then, the entries of $\widetilde{\S}$ are given by
\begin{equation}
   \label{eq:approx-schur-entries}
   \widetilde{\S}_{ij} =
   \begin{cases}
      \widetilde{\W}_{ii}^{-1} + \sum_{k \in \mathcal{F}(i)} \widetilde{\M}^{-1}_{kk} &\quad\text{if $i = j$,} \\
      -\widetilde{\M}^{-1}_{kk} &\quad\text{if $i$ and $j$ share common face $k = k_{ij}$,} \\
      0 &\quad\text{otherwise.}
   \end{cases}
\end{equation}
Consequently, the matrix $\widetilde{\S}$ is sparse, since the number of off-diagonal entries is bounded by the number of faces per element (4 for a quadrilateral mesh in 2D, 6 for a hexahedral mesh in 3D), independent of the polynomial degree of the finite element spaces.
Additionally, $\D\widetilde{\M}^{-1}\D^\tr$ can be viewed as a simple discretization of the Laplacian, and the following arguments show that $\S$ is well-preconditioned by methods such as algebraic multigrid.
\begin{proposition}
   The approximate Schur complement $\widetilde{\S}$ is an M-matrix.
\end{proposition}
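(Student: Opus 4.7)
The plan is to verify the two defining properties of a symmetric M-matrix (sometimes called a Stieltjes matrix): namely, that $\widetilde{\S}$ is a Z-matrix (non-positive off-diagonals) and is symmetric positive definite.

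First I would inspect the entry formula \eqref{eq:approx-schur-entries} to check the sign structure. Since $\widetilde{\M} = \diag(\M_\beta)$ and $\widetilde{\W} = \diag(\W_\alpha)$ are the diagonals of mass matrices built from a positive coefficient, all entries $\widetilde{\M}_{kk}$ and $\widetilde{\W}_{ii}$ are strictly positive. Therefore the diagonal entries $\widetilde{\W}_{ii}^{-1} + \sum_{k \in \mathcal{F}(i)} \widetilde{\M}_{kk}^{-1}$ are strictly positive, and the non-zero off-diagonal entries $-\widetilde{\M}_{k_{ij}k_{ij}}^{-1}$ are strictly negative. Hence $\widetilde{\S}$ is a Z-matrix with positive diagonal.

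Next I would establish symmetric positive-definiteness directly from the defining expression \eqref{eq:approx-schur}, $\widetilde{\S} = \widetilde{\W}^{-1} + \D \widetilde{\M}^{-1} \D^\tr$. The matrix $\widetilde{\W}^{-1}$ is a diagonal matrix with strictly positive entries, and so is symmetric positive definite. The matrix $\D \widetilde{\M}^{-1} \D^\tr$ is symmetric positive semi-definite as a congruence of the SPD diagonal $\widetilde{\M}^{-1}$. The sum of an SPD matrix with an SPSD matrix is SPD, so $\widetilde{\S}$ is SPD.

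Finally, I would invoke the standard characterization that a symmetric positive-definite Z-matrix is an M-matrix (equivalently, its inverse is entrywise non-negative); this is the well-known Stieltjes matrix characterization and completes the proof. I do not expect a real obstacle here: the only subtle point worth stating explicitly is that the positive-definiteness of $\widetilde{\S}$ follows from the $\widetilde{\W}^{-1}$ term even when the graph Laplacian contribution $\D \widetilde{\M}^{-1} \D^\tr$ is only semi-definite (which is the situation one would otherwise face for the pure Darcy problem with zero $(2,2)$-block; in that case the argument would need to be augmented by boundary conditions eliminating the constant nullspace of $\D^\tr$).
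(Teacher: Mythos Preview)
Your argument is correct and matches the paper's in spirit: both read off the Z-matrix sign pattern directly from the entry formula \eqref{eq:approx-schur-entries} using that the diagonals of the positive-definite mass matrices satisfy $\widetilde{\W}_{ii}^{-1}>0$ and $\widetilde{\M}_{kk}^{-1}>0$. The only minor difference is in which M-matrix characterization is invoked: the paper's one-line proof implicitly uses the strict diagonal dominance visible in \eqref{eq:approx-schur-entries} (the extra $\widetilde{\W}_{ii}^{-1}$ term makes each row strictly dominant), whereas you instead establish symmetric positive-definiteness from the decomposition $\widetilde{\S}=\widetilde{\W}^{-1}+\D\widetilde{\M}^{-1}\D^\tr$ and then appeal to the Stieltjes characterization---both routes are equally short and yield the same conclusion.
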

\begin{proof}
   This follows from the explicit form \eqref{eq:approx-schur-entries} using that $\W_\alpha$ and $\M_\beta$ are positive-definite, and so $\widetilde{\W}^{-1}_{ii} > 0$ and $\widetilde{\M}^{-1}_{kk} > 0$.
\end{proof}
Furthermore, $\widetilde{\S}$ can be interpreted as a diagonal (approximate) mass matrix $\widetilde{W}$ plus a \textit{weighted graph Laplacian} term $\D \widetilde{\M}^{-1}\D^\tr$.
The graph Laplacian $\L$ of an edge-weighted graph is defined by
\begin{equation}
   \label{eq:graph-laplacian}
   \L_{ij} = \begin{cases}
      w_i &\quad \text{if $i = j$,} \\
      -w_{ij} &\quad \text{if $i \sim j$,} \\
      0 &\quad \text{otherwise,}
   \end{cases}
\end{equation}
where we write $i \sim j$ if the graph vertices $i$ and $j$ are connected by an edge (with weight $w_{ij}$). The vertex weight $w_i$ is defined by $w_i = \sum_{i \sim j} w_{ij}$.
Comparing \eqref{eq:approx-schur-entries} to \eqref{eq:graph-laplacian}, it is straightforward to see that $\D \widetilde{\M}^{-1}\D^\tr$ is the graph Laplacian corresponding to the weighted face-neighbor connectivity graph of the subelement mesh, with weights corresponding to the reciprocal of the diagonal entries of $\M_\beta$ (the diagonal entries of $\M_\beta$ scale like area of the corresponding subelement face).
Note that this system is very closely related to the lowest-order discontinuous Galerkin discretization described in \cite{Pazner2023}.
Algebraic multigrid preconditioners for linear systems of this form are well-studied, and can be expected to result uniform convergence independent of problem size.
The following proposition summarizes the application of block-diagonal preconditioning to the grad-div system.

\begin{proposition}
   Let $\widetilde{\M}$ denote the diagonal of $\M_\beta$ and let $\widehat{S}$ denote a matrix spectrally equivalent to $\S$ (for example, $\widehat{S}^{-1}$ could be given by one AMG V-cycle applied to $\widetilde{\S}$).
   Then, $\BB^{-1}\AA$ is uniformly well conditioned, where $\BB$ is the block diagonal preconditioner
   \[
      \BB = \begin{pmatrix}
         \widetilde{\M} & \Zero \\
         \Zero & \widehat{\S}
      \end{pmatrix}.
   \]
\end{proposition}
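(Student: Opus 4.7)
The plan is to chain together the two earlier results: \Cref{prop:diag-precond} (which handles the ideal block-diagonal preconditioner built from $\M_\beta$ and the exact Schur complement $\S$) with the unnamed spectral-equivalence proposition (which guarantees that replacing each diagonal block by a spectrally equivalent surrogate preserves the uniform condition number). What must be supplied beyond this direct quotation is the spectral equivalence between $\widehat{\S}$ and the true Schur complement $\S$.

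First I would observe that $\AA$ in \eqref{eq:transformed-system} has the form \eqref{eq:generic-saddle-point} with $(1,1)$-block $\M_\beta$ and $\C = \W_\alpha^{-1}$, both SPD, and apply \Cref{prop:diag-precond} to conclude that the \emph{ideal} preconditioner
\[
   \BB_0 = \begin{pmatrix} \M_\beta & \Zero \\ \Zero & \S \end{pmatrix},
   \qquad \S = \W_\alpha^{-1} + \D \M_\beta^{-1} \D^\tr,
\]
yields spectral bounds as in \eqref{eq:spectral-bounds} with constants $c, C$ independent of $h$ and $p$. Second, because $\BB$ and $\BB_0$ are both block-diagonal, the spectral equivalence $\BB \sim \BB_0$ reduces to proving the two block equivalences $\widetilde{\M} \sim \M_\beta$ and $\widehat{\S} \sim \S$ separately.

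The equivalence $\widetilde{\M} \sim \M_\beta$ is immediate from the first bullet of \Cref{sec:basis}: the Raviart--Thomas mass matrix in the interpolation--histopolation basis is spectrally equivalent to its diagonal, with constants independent of mesh size and polynomial degree. The main (but still short) step is the equivalence $\widehat{\S} \sim \S$. By hypothesis $\widehat{\S} \sim \widetilde{\S}$, so by transitivity it suffices to show $\widetilde{\S} \sim \S$, where $\widetilde{\S} = \widetilde{\W}^{-1} + \D \widetilde{\M}^{-1} \D^\tr$. The $L^2$ diagonal equivalence (second bullet of \Cref{sec:basis}) gives $\widetilde{\W} \sim \W_\alpha$, hence $\widetilde{\W}^{-1} \sim \W_\alpha^{-1}$; and $\widetilde{\M} \sim \M_\beta$ gives $\widetilde{\M}^{-1} \sim \M_\beta^{-1}$, which after conjugation by $\D$ yields $\D \widetilde{\M}^{-1} \D^\tr \sim \D \M_\beta^{-1} \D^\tr$. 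Adding the two equivalent SPD pieces (using the elementary fact that $A_i \leq c_i B_i$ for $i=1,2$ implies $A_1 + A_2 \leq \max(c_1,c_2)(B_1 + B_2)$) produces $\widetilde{\S} \sim \S$ with constants depending only on those from the basis properties.

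Combining $\BB \sim \BB_0$ with the spectral-equivalence proposition then yields $\kappa(\BB^{-1}\AA) \leq C\widetilde{C}/(c\widetilde{c})$ uniformly in $h$ and $p$. The only nontrivial step is the spectral equivalence of the approximate Schur complement $\widetilde{\S}$ with the true Schur complement $\S$; once this is in hand, everything else is a direct appeal to earlier propositions. One could tighten the constant by invoking \Cref{prop:scaled-precond} with the rescaled block $2\widetilde{\M}$, but this is cosmetic for the qualitative statement of uniform boundedness.
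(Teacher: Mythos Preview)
Your proposal is correct and matches the paper's approach: the paper does not write out an explicit proof for this proposition, treating it as a summary of the preceding development (Propositions~\ref{prop:diag-precond} and the spectral-equivalence proposition, combined with the basis properties of \Cref{sec:basis} and the construction of $\widetilde{\S}$ in \eqref{eq:approx-schur}). You have correctly identified and supplied the one step the paper leaves implicit, namely the uniform spectral equivalence $\widetilde{\S}\sim\S$ obtained by summing the two termwise equivalences.
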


\subsection{Application to the Darcy system}
\label{sec:darcy}

The treatment of the Darcy system \eqref{eq:darcy-nonzero} is largely similar to that of the grad-div system \eqref{eq:saddle-point}.
The relevant difference is that in the case of the Darcy system, the off-diagonal blocks $\B$ and $\B^\tr$ are not weighted by the coefficient $\gamma$.
As a result, the transformed system takes the slightly modified form
\begin{equation}
   \label{eq:transformed-system-darcy}
   \AA' =
   \begin{pmatrix}
            \M_{1/\varepsilon} & \D^\tr \\
            \D & -\W^{-1}\W_\gamma\W^{-1}
   \end{pmatrix}
   =
   \begin{pmatrix}
      \I & \Zero \\
      \Zero & \W^{-1}
   \end{pmatrix}
   \begin{pmatrix}
      \M_{1/\varepsilon} & \B^\tr \\
      \B & -\W_\gamma
   \end{pmatrix}
   \begin{pmatrix}
      \I & \Zero \\
      \Zero & \W^{-1}
   \end{pmatrix}
\end{equation}
In the case that $\gamma v_h \in W_h$ for all $v_h \in W_h$, the $(2,2)$-block of $\AA'$ can be simplified to $-\W_{1/\gamma}^{-1}$.
This can be seen by noting that $\W^{-1} \W_\gamma \w$ (where the vector $\w$ corresponds to $w_h \in W_h$) is the unique element $u_h \in W_h$ satisfying
\[
   (u_h, v_h) = (\gamma w_h, v_h)
\]
for all $v_h \in W_h$.
Likewise, $\W_{1/\gamma}^{-1} \W \w$ is the unique element $u_h' \in W_h$ satisfying
\[
   (\gamma^{-1} u_h', v_h) = (w_h, v_h)
\]
for all $v_h \in W_h$.
When $\gamma v_h \in  W_h$, we have
\[
   (u_h, v_h) = (\gamma w_h, v_h) = (w_h, \gamma v_h) = (w_h, \gamma v_h) = (\gamma^{-1} u_h', \gamma v_h) = (u_h', v_h),
\]
and hence $\W^{-1}\W_\gamma$ = $\W_{1/\gamma}^{-1}\W$, and so $\W^{-1}\W_\gamma\W^{-1} = \W_{1/\gamma}^{-1}\W \W^{-1} = \W_{1/\gamma}^{-1}$.
In particular, this holds when the coefficient $\gamma$ is piecewise constant (i.e.\ $\gamma|_\k$ is constant for each $\k \in \T$).
In the general case, this simplification does not hold, and the full form of the $(2,2)$-block is used.
In the case of the Darcy problem with zero $(2,2)$-block (corresponding to the Poisson problem $-\nabla \cdot (\varepsilon \nabla p) = g$), the matrix $\W^{-1}$ is not required to compute the action of $\AA'$, reducing the computational cost of the MINRES iterations.

As before, the $\Hdiv$ mass matrix $\M_{1/\varepsilon}$ can be approximated by its diagonal, and $\W^{-1}\W_\gamma\W^{-1}$ can be approximated be the product of (the reciprocals of) the diagonals of the terms in the product.
The approximate Schur complement $\widetilde{\S}$ takes the same form \eqref{eq:approx-schur-entries} as in the preceding section, and can be effectively preconditioned with algebraic multigrid.

\subsection{\texorpdfstring{$L^2$}{L2} mass inverse}
\label{sec:dg-mass-inv}

Solving the transformed saddle-point system $\AA$ given by \eqref{eq:transformed-system} by a Krylov subspace method such as MINRES (or GMRES in the case of a non-symmetric preconditioner) requires, at every iteration, the action of the operator $\W^{-1}$.
The matrix $\W_\alpha$ corresponds to the $\alpha$-weighted $L^2$ inner product on the space $W_h$, i.e.,
\[
   \r^\tr \W_\alpha \q = (\alpha q, r), \quad q,r \in W_h.
\]
Since the space $W_h$ is discontinuous, $\W$ is block-diagonal, with blocks $\W_i$ corresponding to the $i$th element of the mesh $\k_i$.
The block $\W_i$ represents the $L^2$ inner product over $\k_i$,
\[
   \r_i^\tr \W_i \q_i = (\alpha q, r)_{\k_i},
\]
where $\q_i$ and $\r_i$ are subvectors of $\q$ and $\r$ consisting of those degrees of freedom belonging to the element $\k_i$.
Hence, each block $\W_i$ is symmetric and positive-definite, and has size $p^d \times p^d$.

For modest values of $p$, assembling and factorizing each of the blocks $\W_i$ (e.g. using Cholesky factorization or spectral decomposition) is an appropriate strategy, allowing for the application of $\W_\alpha^{-1}$ with roughly the same cost as standard operator evaluations or matrix-vector products.
However, for higher-order problems, it becomes infeasible to assemble, store, and factorize the blocks $\W_i$.
Each block requires $\mathcal{O}(p^{2d})$ storage, and the assembly and factorization require $\mathcal{O}(p^{3d})$ operations.
In this case, it is preferable to compute the action of $\W_\alpha^{-1}$ matrix-free using an iterative method.

In the matrix-free setting, the action of $\W_\alpha^{-1}$ is computed block-by-block using a local conjugate gradient (CG) iteration.
Because of properties of the basis for the space $W_h$, each block $\W_i$ is spectrally equivalent to its diagonal, and so using $\diag(\W_i)$ as a preconditioner will result in a number of CG iterations that is independent of the polynomial degree $p$.
Furthermore, the action of $\W_i$ can be computed efficiently with matrix assembly using a sum factorization approach \cite{Orszag1980,Pazner2018e}.
Although this method is asymptotically optimal in terms of iterations (the cost of computing $\W_\alpha^{-1}$ is proportional to the cost of evaluating $\W_\alpha$, independent of discretization parameters), the constants in the condition number estimates can be improved by using a change of basis.

In \Cref{fig:l2-mass} we show the condition number of the 2D and 3D $L^2$ mass matrix on a single highly skewed element.
In each case, we consider the diagonally scaled mass matrix using a different basis.
In all cases, the condition number is bounded with respect to $p$.
As predicted by analytical estimates (cf.\ \cite{Teukolsky2015,Kolev2022}) the condition number of the Gauss--Lobatto mass matrix \textit{improves} with increasing $p$.
The condition number obtained using the interpolation--histopolation basis displays a preasymptotic increase in condition number, and for larger $p$ results in largely similar condition numbers as with the Gauss--Lobatto basis.
The Gauss--Legendre nodal basis results in the best conditioned system, with maximal condition number of approximately 1.12 in these tests.
Note that on elements whose transformation has constant Jacobian determinant (i.e.\ parallelepipeds) the Gauss--Legendre mass matrix is diagonal (and so in this special case, diagonal preconditioning is exact).
However, in the cases we consider here, the elements are skewed, resulting in non-constant Jacobian determinants, and so the mass matrix is not diagonal with any of the bases considered.
Even in this case, the diagonally preconditioned matrix remains extremely well conditioned, with condition number only slightly larger than unity.

\begin{figure}
   \centering
   \includegraphics{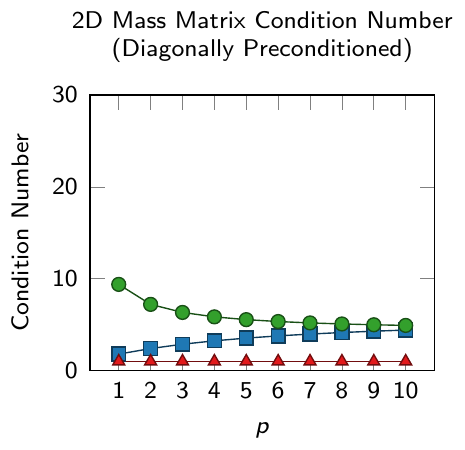}
   \includegraphics{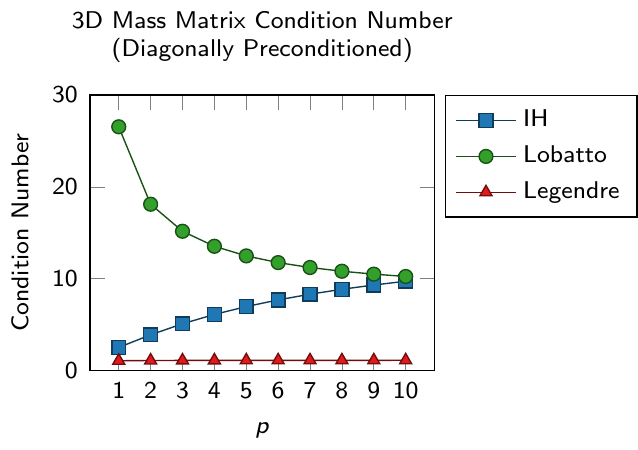}
   \caption{
      Condition number of the $L^2$ mass matrix preconditioned by diagonal scaling for different choices of basis.
      Left: 2D case on a skewed quadrilateral.
      Right: 3D case on a skewed hexahedron.
   }
   \label{fig:l2-mass}
\end{figure}

Motivated by these results, faster CG convergence can be obtained by first performing a change of basis, and then iteratively solving the system in the Gauss--Legendre basis with diagonal preconditioning.
Performing the change of basis necessitates the transformation of the right-hand side and the obtained solution.
If a non-zero initial guess is used, one additional transformation must also be performed.
Each of these transformations is generally less expensive than one operator evaluation, and so this will result in an overall efficiency gain if the total iteration count is reduced by at least two or three.
Numerical experiments studying these considerations are presented in \Cref{sec:dg-mass-gpu}.

\subsection{Notes on the untransformed system}

Instead of considering the transformed system $\AA$ defined by \eqref{eq:transformed-system}, it is also possible to apply block preconditioners directly to the saddle-point system \eqref{eq:saddle-point}.
This has the advantage that each MINRES iteration would require computing the action of $\W_\alpha$ rather than $\W_\alpha^{-1}$, potentially leading to computational savings.
However, this approach has two drawbacks.
Firstly, the off-diagonal blocks $\B_\alpha$ and $\B_\alpha^\tr$ do not enjoy the same favorable sparsity as $\D$ and $\D^\tr$, and therefore are more expensive to compute.
Secondly, and more importantly, the Schur complement of the original system takes the form
\begin{equation}
   \label{eq:untransformed-schur}
   \S' = \W_\alpha + \B_\alpha \M_\beta^{-1} B_\alpha^\tr
      = \W_\alpha + \W_\alpha \D \M_\beta^{-1} \D^\tr \W_\alpha.
\end{equation}
In order to construct block preconditioners, approximations of the inverse of $\S'$ are required.
As in the construction of the approximate Schur complement $\widetilde{\S}$ in \eqref{eq:approx-schur}, it is possible to replace $\W_\alpha$ and $\M_\beta$ by their diagonals, to obtain a matrix $\widetilde{\S}'$ spectrally equivalent to $\S'$.
This approximation of the second term on the right-hand side of \eqref{eq:untransformed-schur} is significantly worse that the approximation of the transformed Schur complement $\S$ by \eqref{eq:approx-schur}, since it involves the product of three approximations rather than one.
A very rough estimate would suggest that, while still asymptotically optimal, the condition number of $\widetilde{\S}'^{-1} \S'$ could be very roughly bounded by the cube of the condition number of $\widetilde{\S}^{-1}\S$.

In \Cref{fig:schur-cond}, we display computed condition numbers of the transformed preconditioned Schur complement $\widetilde{\S}^{-1}\S$ and the untransformed version $\widetilde{\S}'^{-1} \S'$ for increasing polynomial degree.
We consider both the unit cube $[0,1]^d$, and a skewed single-element mesh (the same mesh used in the results shown in \Cref{fig:l2-mass}).
Although both versions of the preconditioned Schur complement have $\mathcal{O}(1)$ condition number with respect to polynomial degree, the empirical results show that the transformed Schur complement results in significantly smaller condition number, in accordance with arguments described above.
These results justify the use of the transformed system \eqref{eq:transformed-system}, despite the increased cost per iteration incurred by the inverse $L^2$ mass matrix.

\begin{figure}
   \centering
   \includegraphics[scale=0.99]{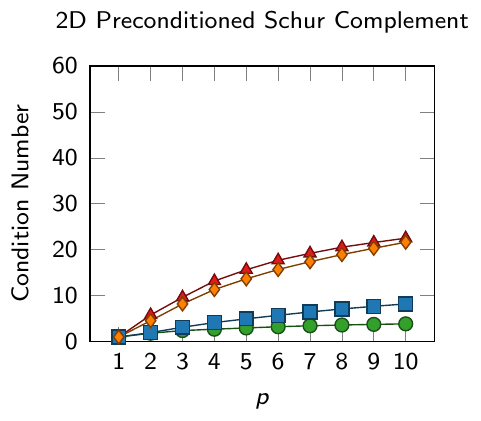}%
   \includegraphics[scale=0.99]{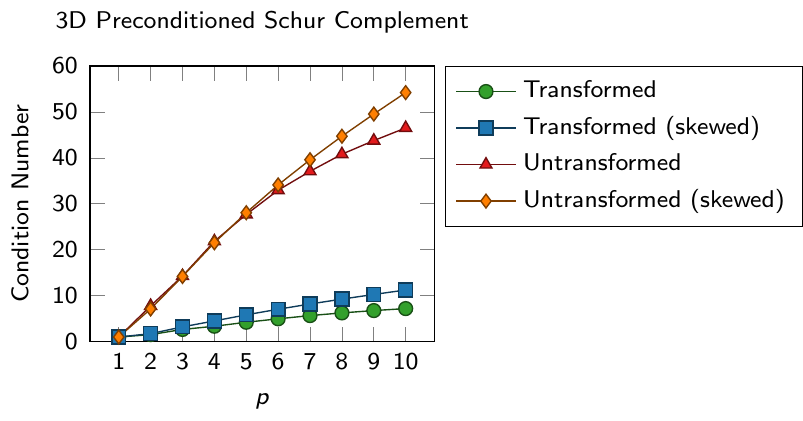}
   \caption{
      Computed condition numbers of preconditioner Schur complement.
      The transformed system is given by $\widetilde{\S}^{-1}\S$, and the untransformed system is given by $\widetilde{\S}'^{-1} \S'$.
   }
   \label{fig:schur-cond}
\end{figure}

\section{Algorithmic details and GPU acceleration}
\label{sec:alg-gpu}

In this section, we give a brief overview of the solution algorithm and describe the strategies used for GPU acceleration.
To solve the variational problem \eqref{eq:block-variational}, we solve the saddle-point system
\begin{equation}
   \label{eq:transformed-saddle-point}
   \begin{pmatrix}
      \M_\beta & \D^\tr \\
      \D & -\W_\alpha^{-1}
   \end{pmatrix}
   \begin{pmatrix} \u \\ \widetilde{\q} \end{pmatrix}
   =
   \begin{pmatrix} \f \\ \Zero \end{pmatrix},
\end{equation}
such that $(\u, \q)$ for $\q = \W_\alpha^{-1} \widetilde{\q}$ gives the solution to the untransformed system \eqref{eq:saddle-point}.
A schematic of the solution procedure is shown in \Cref{fig:solver-diagram}.
This system is solved using MINRES (with the block-diagonal preconditioner) or GMRES (with block-triangular preconditioners).
Each iteration requires application of the operators $\M_\beta$, $\D$, $\D^\tr$ and $\W_\alpha^{-1}$, and application of the preconditioners $\widetilde{\M}^{-1}$ and $\widehat{\S}^{-1}$.
The high-order mass matrix $\M_\beta$ can be applied efficiently using matrix-free algorithms with sum factorization (cf.~\cite{Abdelfattah2021,Fischer2020}).
The discrete divergence operator $\D$ can be represented explicitly as a sparse matrix (where the number of nonzeros per row remains bounded, independent of the polynomial degree).
The efficient application of $\W_\alpha^{-1}$ with GPU acceleration is discussed in \Cref{sec:dg-mass-gpu}.
The preconditioner $\widetilde{\M}^{-1}$ for the $(1,1)$-block is simply diagonal scaling, and is trivially parallelizable.
In this work, we use algebraic multigrid for the approximate Schur complement preconditioner; details are discussed in \Cref{sec:schur-gpu}.

\begin{figure}
   \centering
   \begin{tikzpicture}[%
      every node/.style={draw=black, anchor=west, font={\sf\small}},
      grow via three points={
         one child at (1.8,-0.7) and
         two children at (1.8,-0.7) and (1.8,-1.4)},
      edge from parent path={([xshift=1cm] \tikzparentnode.south west) |- (\tikzchildnode.west)},
      growth parent anchor=west,
   ]
      \node {Krylov method: MINRES or GMRES}
      child {node {Matrix-free operator evaluation}
         child {node {$\M_\beta$: partial assembly}}
         child {node {$\D$, $\D^\tr$: assembled sparse matrix}}
         child {node {$\W_\alpha^{-1}$: local CG in Gauss--Legendre basis}}
      }
      child [missing] {} child [missing] {} child [missing] {}
      child {node {Preconditioner application}
         child {node {$\widetilde{\M}^{-1}$: diagonal scaling}}
         child {node {$\widehat{\S}^{-1}$: AMG V-cycle}}
      };
   \end{tikzpicture}
   \caption{Solver diagram for block preconditioners applied to the transformed saddle-point system \eqref{eq:transformed-saddle-point}.}
   \label{fig:solver-diagram}
\end{figure}
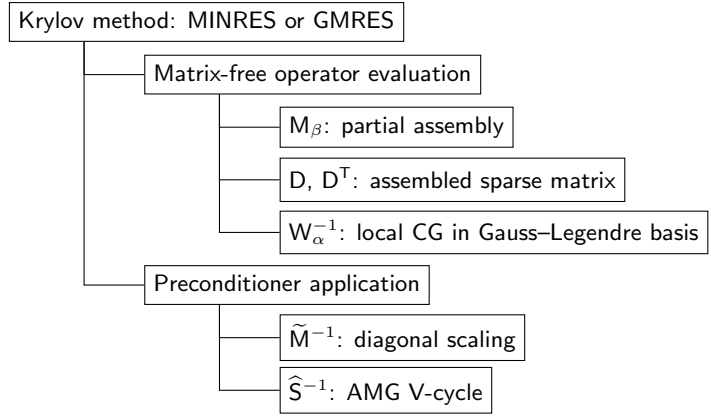

\subsection{GPU acceleration of the DG mass inverse}
\label{sec:dg-mass-gpu}

Because each iteration involving the system \eqref{eq:transformed-saddle-point} requires computing the action of $\W_\alpha^{-1}$, it is imperative to develop fast algorithms for the action of the inverse of the DG mass matrix.
As described in \Cref{sec:dg-mass-inv}, we consider several approaches, depending on the polynomial degree and spatial dimension.
For relatively small polynomial degrees, it is feasible to assemble and store the $L^2$ mass matrix, allowing for the use of direct methods;
for higher polynomial degrees, the memory requirements and computational time required to assemble the matrix render this option infeasible, necessitating the use of iterative solvers.

Since this matrix is block-diagonal, in the case where the matrix may be assembled, each block is stored as a dense matrix, which can then be factorized.
These operations are particularly well-suited for use with batched linear algebra libraries \cite{Dongarra2017}.
The performance of batched LU solvers and matrix-matrix multiplication operations on GPUs has been well-studied \cite{Abdelfattah2016,Villa2013}.
Presently, we consider two approaches.
The first approach is to perform a batched Cholesky factorization (\texttt{potrf}) during the setup, and then each application of the mass inverse will perform a triangular solve (\texttt{potrs}).
Batched versions of these routines are available on Nvidia and AMD GPU hardware through the cuSOLVER and rocSOLVER libraries.
The second approach computes the explicit inverse of each block (\texttt{getrf} followed by \texttt{getri}), and then each application is computed using a batched matrix-vector product (\texttt{gemv}).
While the computation of the inverse matrix necessitates additional overhead and is less numerically stable than solving the system using the Cholesky factorization, it has the advantage that the application of the inverse is computed using a matrix-vector product rather than a triangular solve, which is better-suited for fine-grained parallelism.
For symmetric positive-definite matrices of small sizes, matrix inversion typically possesses satisfactory stability bounds \cite{Croz1992}.
Since direct methods are only relevant for small-sized matrices in this application, the numerical stability of explicit matrix inverses is not a primary concern.

For larger polynomial degrees, direct methods become impractical or infeasible.
In this case, we perform conjugate gradient iterations locally on each element in parallel.
Because the system is block-diagonal, each conjugate gradient solver is independent, and no global reductions are required.
In our implementation, the entire CG iteration is performed in a single (fused) kernel, with one block of threads per element.
The number of threads per block is chosen to facilitate the application of the local mass operator.
This action corresponds to the ``\textit{CEED BP1}'' benchmark problem, and highly optimized GPU implementations for this operation are available \cite{Abdelfattah2016,Dobrev2017}.
This kernel also performs the initial change of basis operations, allowing the CG iteration to solve the better-conditioned Gauss-Legendre mass matrix, regardless of the user's choice basis for the discretization (see \Cref{fig:l2-mass}).
The vector operations (\texttt{axpy}) in the CG method are local to each element, and are trivially parallelizable.
The Dot products required by CG are performed using parallel reductions in each block.

\begin{figure}
   \centering
   \includegraphics[width=0.33\linewidth]{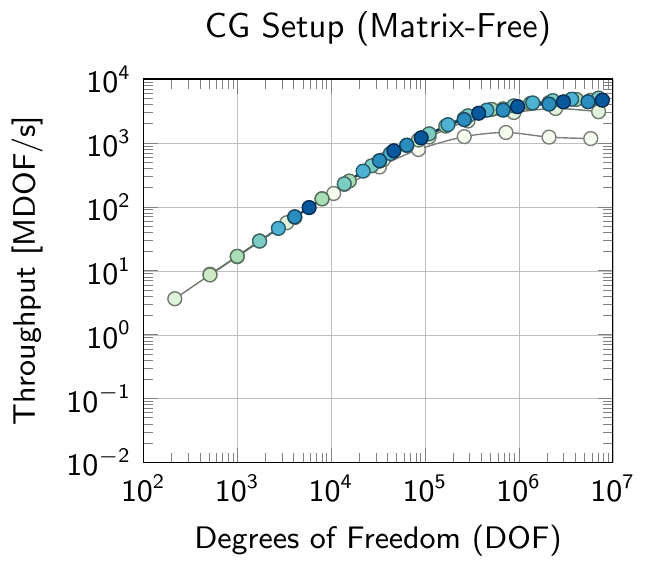}%
   \includegraphics[width=0.33\linewidth]{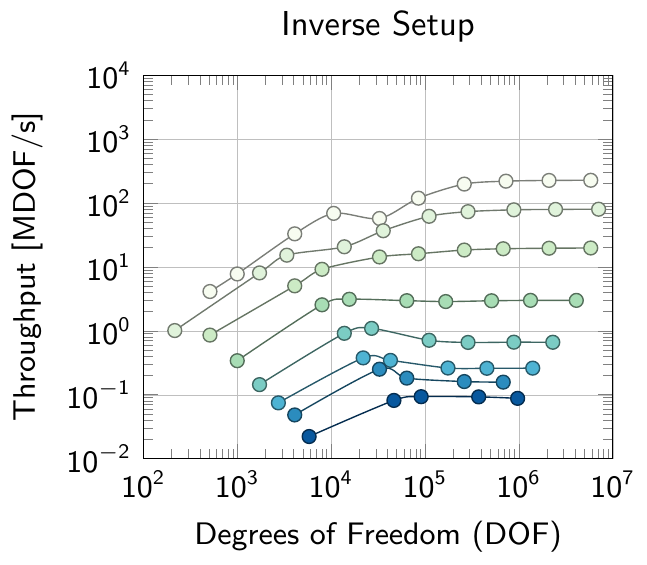}%
   \includegraphics[width=0.33\linewidth]{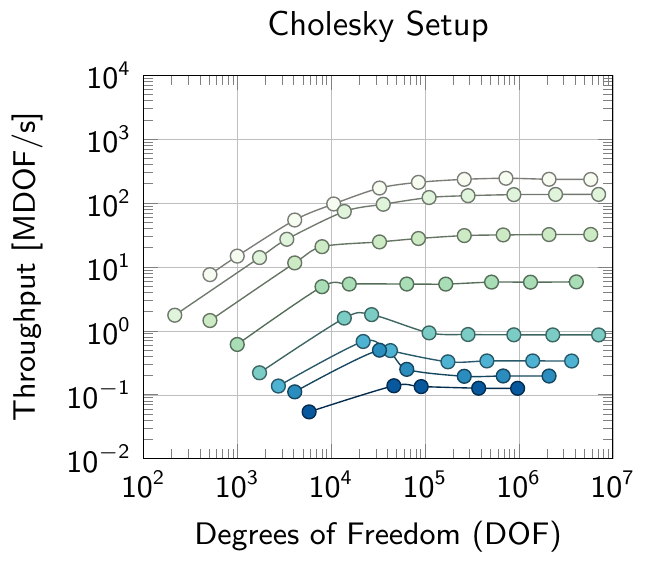}\\[8pt]
   \includegraphics[scale=0.85]{fig/dg_mass_inv/throughput_legend}

   \caption{GPU DG mass inverse setup phase.}
   \label{fig:dg-mass-inv-setup}
\end{figure}

In \Cref{fig:dg-mass-inv-setup}, we display GPU kernel throughput for the mass solver setup operations using these approaches.
The matrix-free CG setup consists of precomputing computing data (such as geometric factors) at quadrature points and assembling the diagonal of the matrix to be used for Jacobi preconditioning.
The setup for the direct methods requires assembling the local mass matrices and performing the dense factorizations.
The Cholesky and explicit inverse methods exhibit largely similar performance: the highest performance is achieved for the lowest-order discretizations (i.e.\ larger batches of smaller matrices).
Performance decreases monotonically as the polynomial degree (and matrix size) increase; large problems at $p \geq 5$ could not be run to completion because of memory limitations.
On the other hand, the CG setup, including assembly of the diagonal, lends itself to sum factorization, requiring $\mathcal{O}(p^{d+1})$ operations and $\mathcal{O}(p^d)$ memory, leading to significantly improved performance for higher polynomial degrees.

\begin{figure}
   \centering
   \setlength{\tabcolsep}{0 pt}
   \begin{tabular}{cc}
      \includegraphics[scale=0.85]{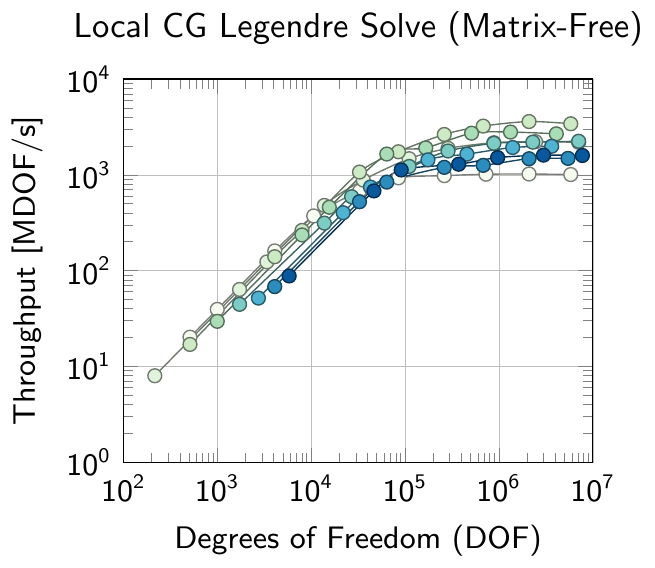} &
      \includegraphics[scale=0.85]{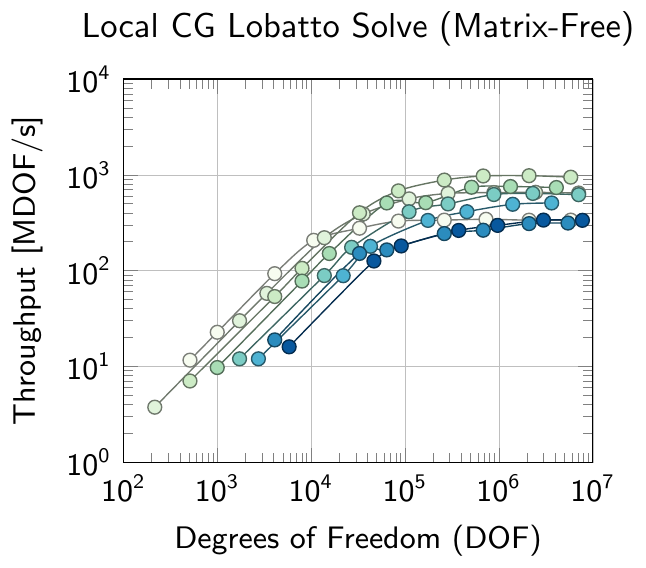} \\
      \includegraphics[scale=0.85]{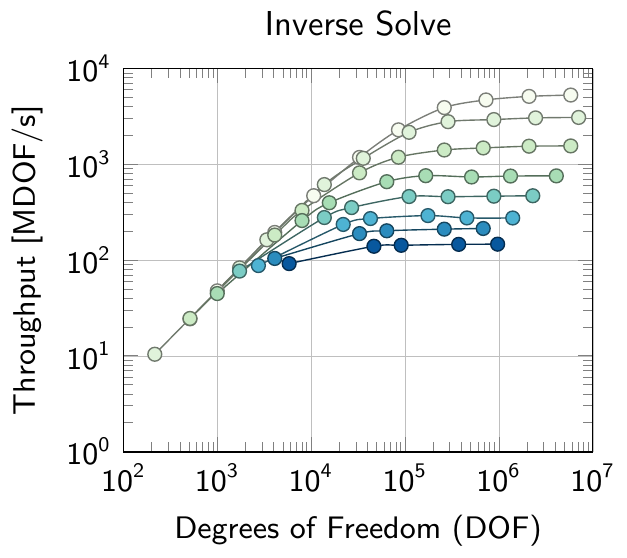} &
      \includegraphics[scale=0.85]{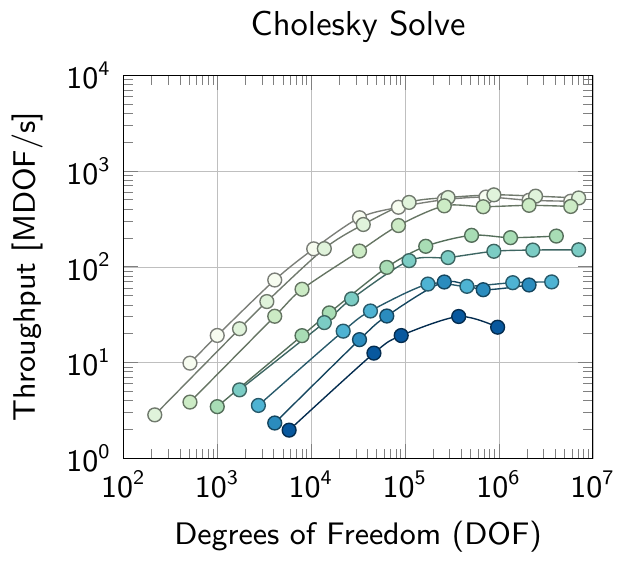} \\
   \end{tabular}
   \includegraphics[scale=0.85]{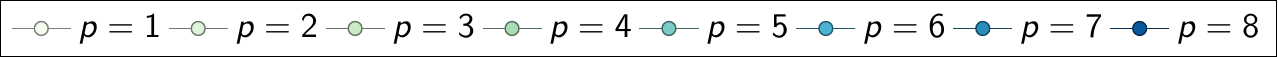}

   \caption{GPU DG mass inverse solve phase.}
   \label{fig:dg-mass-inv-solve}
\end{figure}

In the context of the iterative solvers considered presently, the setup time is of lesser concern than the operator application time:
the setup is performed once, while the action of $\W^{-1}$ must be computed at least once every iteration (and twice every iteration in the case of variable-coefficient Darcy problems, see \Cref{sec:darcy}).
\Cref{fig:dg-mass-inv-solve} displays GPU kernel throughput for computing the action of $\W^{-1}$ using the methods described in this section.
These results make clear that significantly higher performance is achieved in the CG solvers by first changing basis to the Gauss--Legendre basis, and then solving the better-conditioned system (cf.~\Cref{fig:l2-mass}).
The matrix-free methods using the Gauss--Legendre basis consistently achieve over $2 \times 10^9$ degrees of freedom per second, with a peak performance of $3.6 \times 10^9$ at $p=3$.
In contrast, a standard CG iteration with global reductions and without fused kernels achieves peak performance about 2--3 orders of magnitude slower than the local CG approach described here.

Comparing the direct solvers, it is clear that using the precomputed explicit inverse matrix results in significantly faster performance than the Cholesky factorization.
This is likely attributable to the increased fine-grain parallelism possible when performing batched matrix-vector products compared with triangular solves.
For $p=1$ and $p=2$, the explicit inverse results in the fastest solve time of all methods considered;
the highest throughput observed was $5.2\times10^9$ degrees of freedom per second, achieved for the largest problem size with $p=1$.
For $p=3$, the explicit inverse has throughput roughly equal to that of the Gauss--Legendre CG iteration (with significantly more expensive setup costs), and for $p \geq 4$, the Gauss--Legendre CG iteration is the most performant method considered.

\begin{table}
   \caption{
      DG mass inverse setup and solve run times.
      Solve time is reported for 100 applications of the inverse.
      The fastest method for each polynomial degree is shown in bold.
      For $p=8$, there was insufficient GPU memory to compute the explicit inverse, indicated by ``---''.
   }
   \label{tab:dg-mass-inv}
   \centering
   \newcommand{\bftab}{\fontseries{b}\selectfont}
   \sisetup{detect-weight=true,mode=text}
   \newcolumntype{Y}{S[table-format=2.4]}
   \ifsiam\resizebox{\linewidth}{!}{%
   \else\fi%
   \begin{tabular}{clYYYYYYYY}
      \toprule
      & & {$p = 1$} & {$p = 2$} & {$p = 3$} & {$p = 4$} & {$p = 5$} & {$p = 6$} & {$p = 7$} & {$p = 8$} \\
      \midrule
      \multirow{3}{*}{CG} & Setup & \bftab 0.0016 & \bftab 0.0007 & \bftab 0.0004 & \bftab 0.0004 & \bftab 0.0004 & \bftab 0.0004 & \bftab 0.0005 & \bftab 0.0004 \\
       & Solve {\footnotesize$\times 100$} & 0.1937 & 0.0841 & \bftab 0.0556 & \bftab 0.0598 & \bftab 0.0697 & \bftab 0.0770 & \bftab 0.0981 & \bftab 0.0845 \\
       & Total & 0.1953 & 0.0847 & \bftab 0.0561 & \bftab 0.0603 & \bftab 0.0701 & \bftab 0.0774 & \bftab 0.0986 & \bftab 0.0850 \\
      \midrule
      \multirow{3}{*}{Inverse} & Setup & 0.0071 & 0.0197 & 0.0782 & 0.4544 & 2.2950 & 5.8911 & 10.0749 & {---} \\
       & Solve {\footnotesize$\times 100$} & \bftab 0.0383 & \bftab 0.0598 & 0.1149 & 0.2296 & 0.3690 & 0.6095 & 0.8069 & {---} \\
       & Total & \bftab 0.0454 & \bftab 0.0796 & 0.1932 & 0.6840 & 2.6640 & 6.5007 & 10.8818 & {---} \\
      \midrule
      \multirow{3}{*}{Cholesky} & Setup & 0.0076 & 0.0128 & 0.0539 & 0.3077 & 1.9638 & 4.9905 & 8.8000 & 12.4510 \\
       & Solve {\footnotesize$\times 100$} & 0.3466 & 0.3182 & 0.3996 & 0.8135 & 1.1570 & 2.3312 & 2.5846 & 5.9352 \\
       & Total & 0.3542 & 0.3310 & 0.4535 & 1.1212 & 3.1208 & 7.3216 & 11.3846 & 18.3862 \\
       \bottomrule
   \end{tabular}
   \ifsiam}\else\fi
\end{table}

We conclude with a performance comparison on a series of problems with roughly $1.7 \times 10^6$ DOFs, with polynomial degree $p = 1, 2, \ldots, 8$.
In order to benchmark the solvers in a situation relevant to the saddle-point solvers considered in this paper, on each problem we apply the inverse mass matrix 100 times.
In \Cref{tab:dg-mass-inv}, we present the time required to construct the inverse (or set up the matrix-free CG), the time required for 100 inverse applications, and the total time.
The shortest runtime for each polynomial degree is shown in bold.
The matrix-free CG method has the fastest setup time among the methods considered, with roughly constant time required for all polynomial degrees (the longest setup time was required for $p=1$, which is consistent with the lower throughput results shown in \Cref{fig:dg-mass-inv-setup}).
The explicit inverse resulted in the fastest overall runtimes for $p=1$ ($4.3\times$ faster than CG) and $p=2$ ($1.1\times$ faster than CG).
For polynomial degrees $p \geq 3$, the matrix-free CG resulted in the fastest runtime;
the observed speed-up increases dramatically with the polynomial degree.
For $p=8$, there was insufficient memory to compute the explicit inverse.
For this test case, the CG setup time was $30{,}000\times$ faster than the Cholesky factorization, the solve time was $70\times$ faster than the triangular solve, resulting in an overall speed-up of $216\times$.
The Cholesky factorization was the slowest of the three methods considered;
the slightly faster factorization time compared with the explicit inverse was outweighed by the slower inverse applications.

Based on these results, in the contexts where the inverse of a fixed mass matrix will be applied many times (such as the iterative solvers considered presently, or in the case of time stepping on a fixed mesh), it would be efficient to use explicit inverses for $p \leq 2$, and matrix-free CG for $p \geq 3$.
If, on the other hand, the inverse of the mass matrix will be applied only once (for example, in situations involving a moving mesh, necessitating the recomputation of the mass matrix at every time step), the faster setup time suggests that the matrix-free CG iteration should be used even for $p \leq 2$.

\subsection{Schur complement preconditioning}
\label{sec:schur-gpu}

We now consider GPU-accelerated preconditioning of the approximate Schur complement $\widetilde{\S} = \widetilde{\W}^{-1} + \D \widetilde{M}^{-1} D^\tr$, where we recall that $\widetilde{\W}$ is the diagonal of the $L^2$ mass matrix, $\widetilde{M}$ is the diagonal of the $\Hdiv$ mass matrix, and $\D$ is the discrete divergence matrix.
The approach taken presently is algebraic multigrid preconditioning, and so the first step is the assembly of the matrix.
The diagonals $\widetilde{\W}$ and $\widetilde{\M}$ can be assembled efficiently using sum factorization \cite{Ronquist1987}.
Because of the properties of the interpolation--histopolation basis, the discrete divergence matrix $\D$ has the same structure as the lowest-order divergence matrix on a refined mesh.
In particular, the entries $\D_{ij}$ are given by
\[
   \D_{ij} = \begin{cases}
      \sigma_{ij} &\quad \text{if}\ j \in \mathcal{F}(i),\\
      0 &\quad\text{otherwise},
   \end{cases}
\]
where $\mathcal{F}(i)$ denotes the set of subelement faces adjacent to the subelement volume $i$ (cf.~\Cref{sec:grad-div-precond}), and $\sigma_{ij} = \pm 1$ denotes the orientation of the Raviart-Thomas basis function on subelement face $j$ relative to subelement volume $i$.
This matrix can be constructed directly in CSR format efficiently and in parallel on the GPU.
The algorithm used to construct this matrix is given in \Cref{alg:discrete-divergence}.
The kernel throughput of this algorithm is shown in the left panel of \Cref{fig:schur-throughput}.

\begin{algorithm}
   \caption{Construction of the discrete divergence matrix $\D$ in CSR format.}
   \label{alg:discrete-divergence}
   \algrenewcommand\algorithmicforall{\textbf{parallel for}}
   \algrenewtext{EndFor}{\textbf{end}}
   \begin{algorithmic}[0]
      \State $N \gets \text{\# of $L^2$ DOFs}$
      \ForAll{$i \in \{0, \ldots, N-1\}$}
         \Comment{For each $L^2$ DOF $i$ (subelement volume)}
         \State $I[i] \gets 2di$ \Comment $2d$ nonzeros per row
         \State $e \gets \texttt{element\_map}[i]$
            \Comment{$e$ is the mesh element containing DOF $i$}
         \State $i_{\textit{loc}} \gets \texttt{l2\_global\_to\_local}[i]$
            \Comment{$i_{\textit{loc}}$ is the local $L^2$ DOF index of $i$ within $e$}
         \For{$k \in \{1, \ldots, 2d\}$} \Comment{For each subelement face adjacent to $i$}
            \State $j_{\textit{loc}} \gets \texttt{volume\_to\_face}[k,i_{\textit{loc}}]$
            \Comment{$j_{\textit{loc}}$ is the local RT DOF index of the subelement face $k$}
            \State $\sigma_{\textit{loc}} \gets \texttt{local\_orientation}[j_{\textit{loc}}]$
            \Comment $\sigma_{\textit{loc}}$ is the orientation of DOF $j_{\textit{loc}}$ in the reference element
            \State $j \gets \texttt{rt\_local\_to\_global}[e, j_{\textit{loc}}]$
            \Comment{$j$ is the global DOF index}
            \State $\sigma_{\textit{glob}} \gets \texttt{global\_orientation}[j]$
            \Comment $\sigma_{\textit{loc}}$ is the orientation of the $j$ relative to $j_{\textit{loc}}$
            \State $J[2di + k] \gets j$
            \State $A[2di + k] \gets \sigma_{\textit{loc}}\sigma_{\textit{glob}}$
         \EndFor
      \EndFor
      \State $I[N] \gets 2dN$
   \end{algorithmic}
\end{algorithm}

\begin{figure}
   \centering
   \includegraphics[scale=0.85]{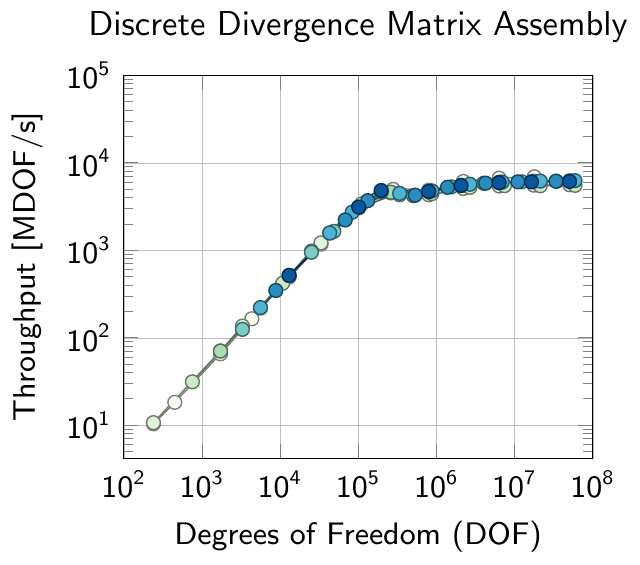}
   \includegraphics[scale=0.85]{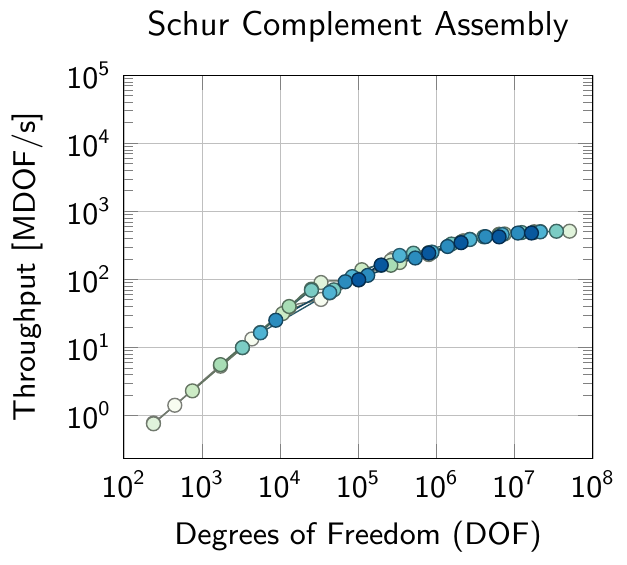}

   \includegraphics[scale=0.85]{fig/dg_mass_inv/throughput_legend}

   \caption{Throughput for discrete divergence and approximate Schur complement assembly.}
   \label{fig:schur-throughput}
\end{figure}

After constructing the discrete divergence, the approximate Schur complement can be constructed by computing the sparse matrix-matrix product $\D \widetilde{\M}^{-1} \D^\tr$.
In the current implementation, this product is computed using the sparse triple-product algorithms provided by the \textit{hypre} solvers library \cite{Falgout2021}.
The throughput for the triple product operation is shown in the right panel of \Cref{fig:schur-throughput}.
Given the assembled Schur complement in CSR format, the approximate inverse $\widehat{\S}^{-1}$ is given by one V-cycle of an algebraic multigrid preconditioner;
in this work, we use the BoomerAMG preconditioner from \textit{hypre} \cite{Henson2002}.
Detailed GPU performance studies for AMG preconditioners applied to similar problems are given in \cite{LORGPU2022}.

\section{Numerical results}
\label{sec:results}

In the following sections, we test the performance of the saddle-point solvers on a number of grad--div and Darcy test problems.
GPU results were performed on LLNL's \textit{Lassen} supercomputer, with four Nvidia V100 GPUs per node.
The methods were implemented in the framework of the open-source MFEM finite element library \cite{Anderson2020}, see also \url{https://mfem.org}.
A relative tolerance of $10^{-12}$ was used as the stopping criterion for all iterative solvers, unless specifically noted otherwise.
The BoomerAMG algebraic multigrid preconditioner from the \textit{hypre} library is used to precondition the approximate Schur complement \cite{Henson2002}, and \textit{hypre}'s ADS preconditioner applied to the low-order-refined system is used as a comparison \cite{Kolev2012}.
The AMG preconditioner for the Schur complement uses parallel maximal independent set (PMIS) coarsening with no aggressive coarsening levels.

\subsection{Grad-div problem: crooked pipe}

As a first numerical example, we consider the ``crooked-pipe'' grad-div problem.
This problem, which serves as a model problem related radiation diffusion simulations, is posed on a cylindrical sector.
The elements are divided into two materials;
the interface between the materials consists of highly stretched, anisotropic elements, increasing the problem difficulty.
We solve the problem
\[
   \nabla (\alpha \nabla \cdot \bm u) - \beta \bm u = \bm f,
\]
where the coefficients $\alpha$ and $\beta$ are piecewise constants determined by the materials.
A schematic of this problem is shown in \Cref{fig:crooked-pipe}.
In the outer region (colored green in \Cref{fig:crooked-pipe}), we have $\alpha = 1.88 \times 10^{-3}$ and $\beta = 2000$, and in the inner region (colored blue in \Cref{fig:crooked-pipe}), we have $\alpha = 1.641$ and $\beta = 0.2$.
This problem, first proposed in \cite{Graziani2000}, was considered as a benchmark problem using the ADS solver in \cite{Kolev2012}, using algebraic hybridization in \cite{Dobrev2019}, and with low-order-refined preconditioning in \cite{Pazner2023}.

\begin{figure}
   \centering
   \raisebox{-0.5\height}{\includegraphics[scale=0.7]{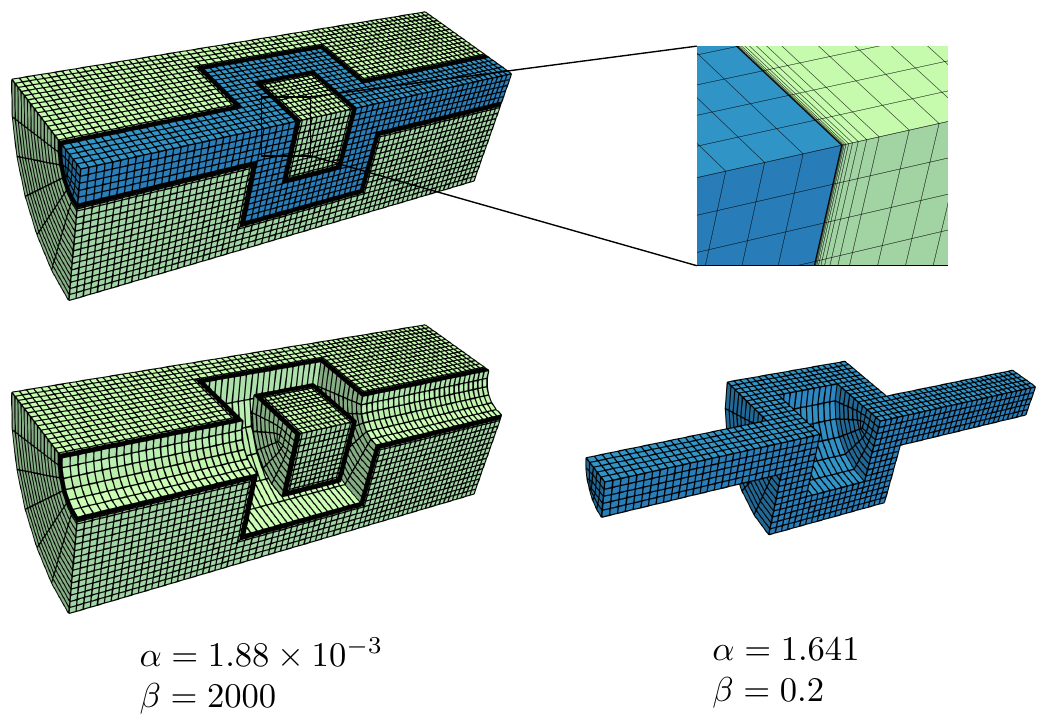}}
   \raisebox{-0.5\height}{\includegraphics[width=1.5in]{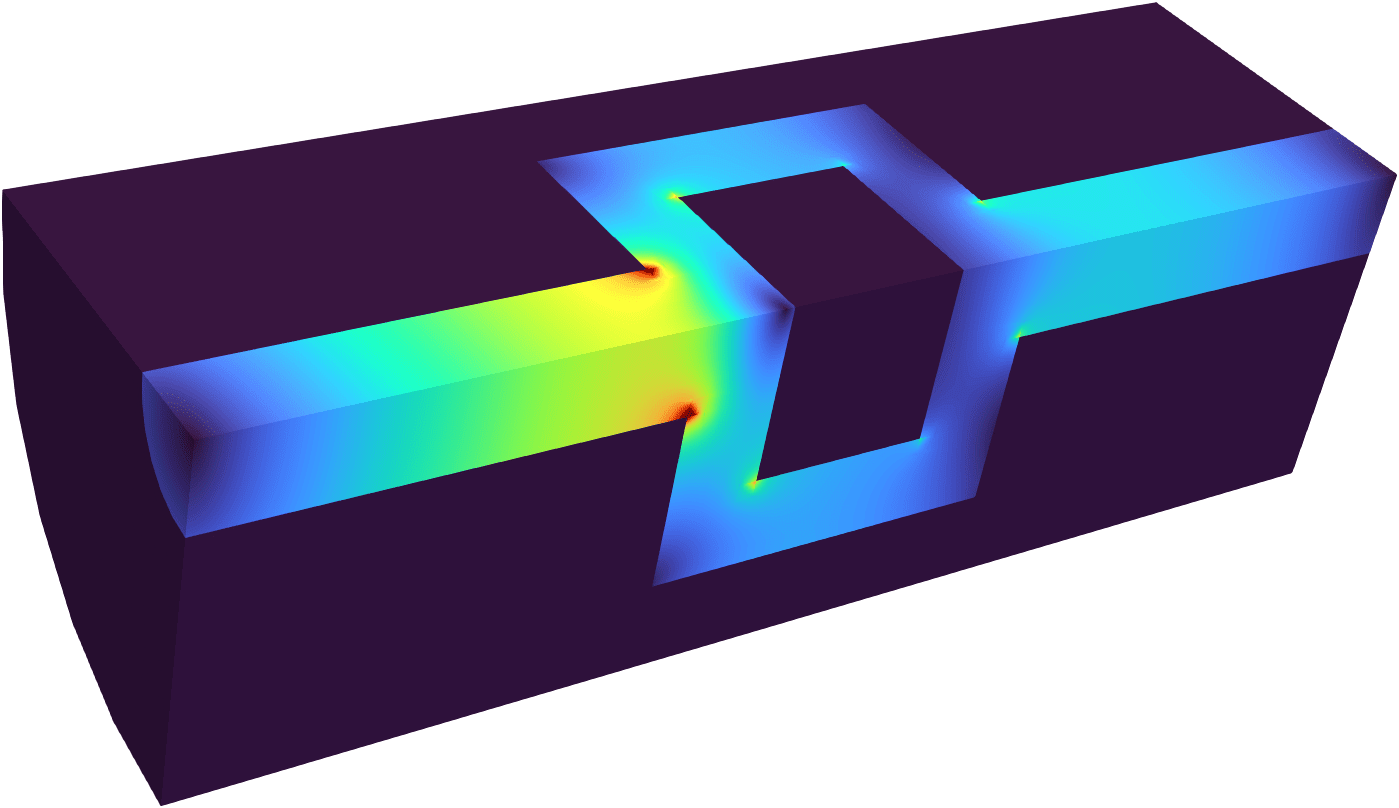}}
   \caption{
      Crooked pipe grad-div problem (figure reproduced from \cite{Pazner2023}).
      Left panel: mesh with inset showing anisotropic elements and definition of piecewise constant coefficients on subdomains.
      Right panel: illustration of solution with constant forcing term.}
   \label{fig:crooked-pipe}
\end{figure}

We use the same problem setup as in \cite{Pazner2023}, and solve this problem with increasing polynomial degree.
We compare the performance of the saddle-point solvers discussed presently to applying the ADS preconditioner directly to the high-order assembled matrix, and to ADS applied to the spectrally equivalent low-order-refined system.
We first compare CPU results computed using 144 MPI ranks on four nodes of LLNL's \textit{Quartz} supercomputer, which is the same configuration considered in \cite{Pazner2023}.
The results are shown in \Cref{tab:crooked-pipe-cpu}.
Both methods display a pre-asymptotic increase in the number of iterations with increasing polynomial degree; no increase in the number of iterations for the saddle-point solver was observed when increasing the polynomial degree from $p=5$ to $p=6$, suggesting that the asymptotic regime has been reached.
At $p=6$, the saddle-point solver required $1.9\times$ as many iterations as at $p=2$, and the LOR--ADS solver required $2.5\times$ as many iterations.
Overall, the saddle-point solver required approximately $1.5\times$ as many iterations as the LOR--ADS solver, however it was faster overall by a factor of 6--9$\times$.
The reason for this speed-up (despite a larger number of iterations) is that the saddle-point preconditioner is much less computationally expensive than the ADS preconditioner:
each application of the saddle-point preconditioner requires just one diagonal scaling and one scalar AMG V-cycle (applied to the smaller $L^2$ system), whereas each application of ADS requires (depending on the specific solver settings) around 11 AMG V-cycles.

\begin{table}
   \centering
   \caption{CPU results from the crooked pipe test problem (144 CPU cores on LLNL's \textit{Quartz}).}
   \label{tab:crooked-pipe-cpu}

   \small

   \begin{tabular}{c|cS|cS|cr}
      \toprule
      & \multicolumn{2}{c|}{Saddle-Point} & \multicolumn{2}{c|}{LOR--ADS} \\
      $p$ & Its. & {Time (s)} & Its. & {Time (s)} & Speed-up & \hdr{\# DOFs} \\
      \midrule
      2 & 168 & 0.26 & 96 & 1.73 & 6.65$\times$ & 356{,}500 \\
      3 & 231 & 0.87 & 143 & 5.73 & 6.59$\times$ & 1{,}190{,}115 \\
      4 & 269 & 2.11 & 172 & 14.91 & 7.07$\times$ & 2{,}805{,}520 \\
      5 & 299 & 4.45 & 206 & 36.90 & 8.29$\times$ & 5{,}461{,}375 \\
      6 & 323 & 8.73 & 241 & 80.84 & 9.26$\times$ & 9{,}416{,}340 \\
      \bottomrule
   \end{tabular}

   \vskip\floatsep
   \caption{GPU results from the crooked pipe test problem (4 Nvidia V100 GPUs on LLNL's \textit{Lassen}).}
   \label{tab:crooked-pipe-gpu}

   \begin{tabular}{c|cS|cS|cr}
      \toprule
      & \multicolumn{2}{c|}{Saddle-Point} & \multicolumn{2}{c|}{LOR--ADS} \\
      $p$ & Its. & {Time (s)} & Its. & {Time (s)} & Speed-up & \hdr{\# DOFs} \\
      \midrule
      2 & 193 & 0.85 & 143 & 7.73 & 9.09$\times$ & 356{,}500 \\
      3 & 251 & 1.52 & 206 & 14.97 & 9.85$\times$ & 1{,}190{,}115 \\
      4 & 298 & 2.19 & 269 & 23.72 & 10.83$\times$ & 2{,}805{,}520 \\
      5 & 335 & 3.49 & 332 & 37.60 & 10.77$\times$ & 5{,}461{,}375 \\
      6 & 360 & 4.01 & 392 & 61.94 & 15.45$\times$ & 9{,}416{,}340 \\
      \bottomrule
   \end{tabular}
\end{table}

These tests are repeated on the GPU using one node of LLNL's \textit{Lassen} with 4 Nvidia V100 GPUs.
The results are shown in \Cref{tab:crooked-pipe-gpu}.
The iteration counts for the GPU test are larger than those obtained on the CPU.
This is because different BoomerAMG settings are used when \textit{hypre} is run on the CPU or GPU \cite{Falgout2021}.
In particular, hybrid Gauss-Seidel smoothing is used on the CPU, while $\ell_1$-Jacobi smoothing is used on the GPU.
This appears to have a more significant detrimental effect on the ADS solver; the iteration counts for the saddle-point solver are largely similar to the CPU case.
The saddle-point solver consistently achieves 9--15$\times$ speed-up compared with the LOR--ADS solver, with greater speed-up at higher orders.
At $p=6$, the total solve time using the saddle-point solver using the GPU on one node of \textit{Lassen} is over twice as fast as on the CPU using 4 nodes of \textit{Quartz}.

\begin{table}
   \centering
   \caption{Matrix-based methods: CPU results from the crooked pipe test problem (144 CPU cores on LLNL's Quartz).}
   \label{tab:crooked-pipe-matrix}

   \small

   \begin{tabular}{c|cS|cS|r}
      \toprule
      & \multicolumn{2}{c|}{Hybridization} & \multicolumn{2}{c|}{ADS} \\
      $p$ & Its. & {Time (s)} & Its. & {Time (s)} & \hdr{\# DOFs} \\
      \midrule
      2 & 28 & 0.16 & 74 & 2.49 & 356{,}500 \\
      3 & 35 & 0.86 & 85 & 19.20 & 1{,}190{,}115 \\
      4 & 36 & 5.73 & 92 & 100.77 & 2{,}805{,}520 \\
      5 & 31 & 29.82 & 102 & 390.32 & 5{,}461{,}375 \\
      6 & 36 & 245.54 & 112 & 1293.12 & 9{,}416{,}340 \\
      \bottomrule
   \end{tabular}
\end{table}

For comparison, we also include CPU results for the matrix-based algebraic hybridization and ADS solvers in \Cref{tab:crooked-pipe-matrix};
GPU results are not reported because GPU-accelerated versions of these algorithms are not available.
Consistent with the results reported in \cite{Dobrev2019}, the algebraic hybridization solver greatly outperforms the ADS solver on all cases considered.
Similarly, the LOR--ADS solver outperforms the ADS solver for $p \geq 2$, as reported in \cite{Pazner2023}, and consequently the saddle-point solver also compares favorably to matrix-based ADS for these test cases.
The comparison with algebraic hybridization is more interesting.
The hybridization solver (which performs a conjugate gradient iteration directly on the hybridized Schur complement system) requires significantly fewer iterations than either ADS or the saddle-point solver.
For $p=2$, hybridization results in the fastest overall runtime, and for $p=3$, we obtained roughly equal runtimes with the hybridization and saddle-point solvers.
However, for $p > 3$, the time required to assemble and set up the hybridized solver becomes prohibitive, and the saddle-point solver results in significant speed-ups.
For $p=4$, the saddle-point solver is almost twice as fast, and for $p=6$, the saddle-point solver results in an almost $30\times$ speedup.
We conclude that for relatively low orders, when assembling and forming the hybridized Schur complement system is not prohibitively expensive, algebraic hybridization is a highly competitive option.
For higher orders, matrix-free methods such as the saddle-point solver discussed presently become necessary, and we note that the benefits of the matrix-free methods, such as reduced memory footprint, are more pronounced on GPU-based architectures.

\subsection{Darcy flow: SPE10 benchmark}

In this section, we consider the SPE10 benchmark problem from the reservoir simulation community \cite{Christie2001}.
This problem has been widely used as a test case for solvers in the literature, see e.g.\ \cite{Kanschat2017,Dobrev2019}.
The problem geometry is $\Omega = [0, 1200] \times [0, 2200] \times [0, 170]$, represented as a Cartesian mesh with $60 \times 220 \times 85$ elements.
The permeability tensor is defined as a piecewise constant on each element.
This coefficient is highly heterogeneous, exhibiting contrast of over $10^7$; see \Cref{fig:spe10}.
We solve Darcy's equations for the velocity and pressure,
\[
\begin{aligned}
   \left\{
   \begin{aligned}
      \kappa \bm u + \nabla p &= 0, \\
      \nabla \cdot \bm u &= 0, \\
   \end{aligned}
   \right. &\qquad\text{in $\Omega$}. \\
   \bm u \cdot \bm n = (1, 0, 0)^\tr \cdot \bm n &\qquad\text{on $\partial\Omega$,}
\end{aligned}
\]
where $\kappa$ is inverse permeability matrix.
Since pure Neumann boundary conditions are enforced for this problem, the Schur complement is singular, with a nullspace consisting of all constant functions.
To ensure a convergent solver, every application of the approximate inverse $\widehat{\S}^{-1}$ is followed by an orthogonalization step.

\begin{figure}
   \centering
   \includegraphics[width=4cm]{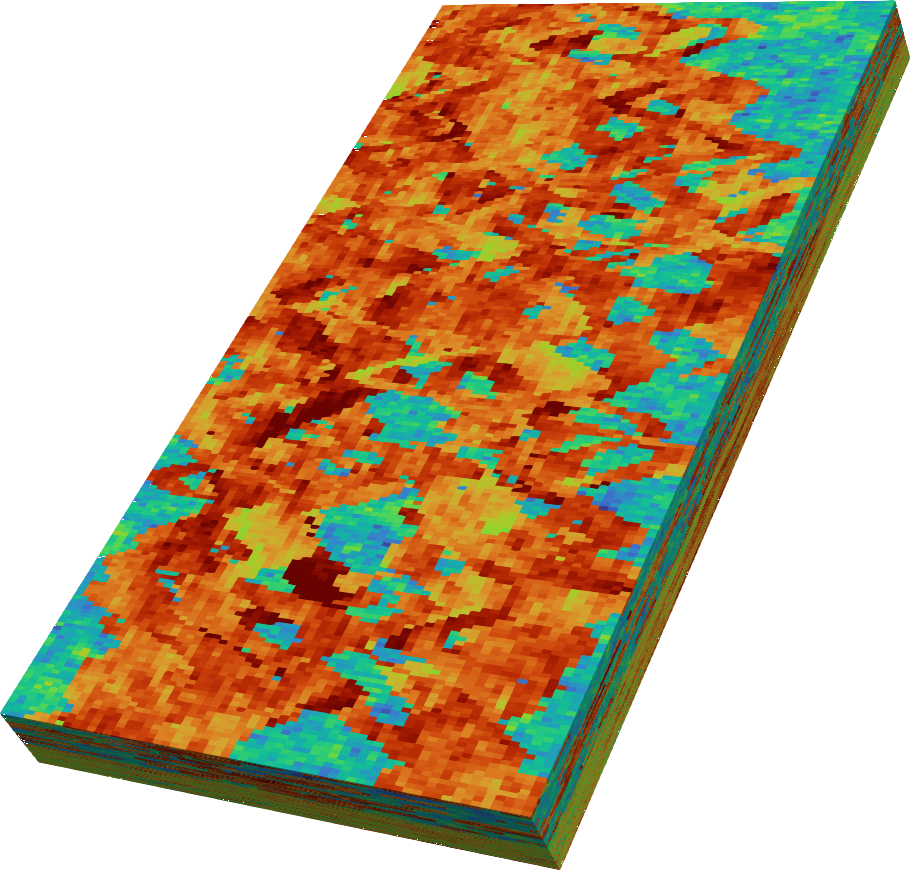}\hspace{12pt}
   \raisebox{0.4cm}{\includegraphics[scale=0.7]{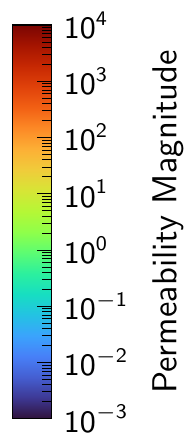}}
   \hspace{3em}
   \includegraphics[width=4cm]{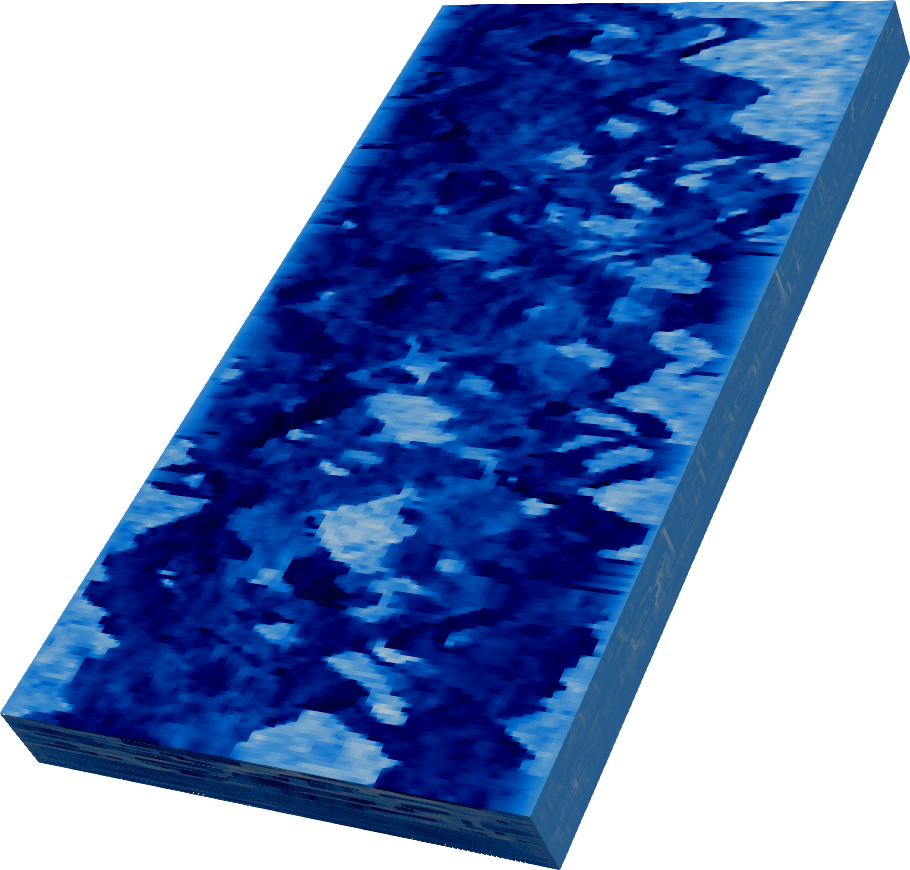}\hspace{12pt}
   \raisebox{0.4cm}{\includegraphics[scale=0.7]{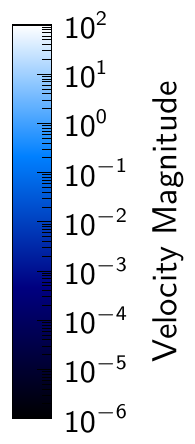}}
   \caption{SPE10 problem, logarithmic color scale (cf.\ \cite{Christie2001}). Left: magnitude of permeability coefficient. Right: solution velocity magnitude.}
   \label{fig:spe10}
\end{figure}

We use this problem to perform a weak scaling study using $p=4$ and $p=6$ elements.
With each refinement, the number of degrees of freedom is roughly doubled by solving on a finer mesh, and the problem is solved using twice the number of GPUs.
With increasing mesh refinement the iteration counts remain roughly constant.
The largest problem has roughly $7 \times 10^8$ unknowns, and is solved in about 10 seconds on 256 GPUs.
For each of the cases considered, the solve time and iterations are similar between the $p=4$ and $p=6$ cases, illustrating the uniform convergence of the solver with respect to polynomial degree.
The parallel scalability of the solver is largely consistent with similar results reported in \cite{Kolev2021a,LORGPU2022}.
For the largest problem, the weak parallel efficiency of the solver is 31\%, comparing favorably to the parallel efficiency of 25\% for the algebraic hybridization solver and 7.5\% for the ADS solver reported for similarly sized SPE10 problems in \cite{Dobrev2019}.

\begin{table}
   \centering
   \caption{Scalability study for the SPE10 benchmark problem.}
   \label{tab:spe10}
   \small
   \begin{tabular}{cccc}
      \toprule
      \multicolumn{4}{c}{$p=4$}\\
      \midrule
      \# GPUs & \# DOFs & Its. & Time (s) \\
      \midrule
      1 & $3.36\times10^{6}$ & 147 & 3.25 \\
      2 & $5.76\times10^{6}$ & 155 & 4.31 \\
      4 & $1.20\times10^{7}$ & 157 & 5.15 \\
      8 & $2.68\times10^{7}$ & 160 & 6.45 \\
      16 & $4.58\times10^{7}$ & 159 & 6.35 \\
      32 & $9.53\times10^{7}$ & 164 & 6.94 \\
      64 & $2.13\times10^{8}$ & 164 & 8.62 \\
      128 & $3.66\times10^{8}$ & 166 & 8.55 \\
      256 & $7.61\times10^{8}$ & 165 & 10.44 \\
      \bottomrule
   \end{tabular}%
   \hspace{3em}
   \begin{tabular}{cccc}
      \toprule
      \multicolumn{4}{c}{$p=6$}\\
      \midrule
      \# GPUs & \# DOFs & Its. & Time (s) \\
      \midrule
      1 & $3.30\times10^{6}$ & 165 & 3.14 \\
      2 & $5.87\times10^{6}$ & 166 & 4.36 \\
      4 & $1.13\times10^{7}$ & 166 & 4.82 \\
      8 & $2.63\times10^{7}$ & 165 & 6.41 \\
      16 & $4.67\times10^{7}$ & 165 & 6.19 \\
      32 & $9.01\times10^{7}$ & 171 & 7.20 \\
      64 & $2.10\times10^{8}$ & 176 & 8.97 \\
      128 & $3.73\times10^{8}$ & 173 & 8.79 \\
      256 & $7.20\times10^{8}$ & 175 & 10.53 \\
      \bottomrule
   \end{tabular}
\end{table}

\subsection{Nonlinear radiation diffusion}

As a final test case, we consider the solution of time-dependent grey (one-group) nonlinear radiation diffusion equations.
The governing equations are
\begin{align}
   \label{eq:rad-diff-e}
   \rho \frac{\partial e}{\partial t} &= -c \sigma (a T_{\mathrm{mat}}^4 - E) + Q, \\
   \label{eq:rad-diff-E}
   \frac{\partial E}{\partial t} + \nabla \cdot F &= c\sigma (a T_{\mathrm{mat}}^4 - E) + S, \\
   \label{eq:rad-diff-F}
   F &= -\frac{c}{3 \sigma} \nabla E,
\end{align}
where $e$ is the material specific internal energy, $E$ is the radiation energy density, and $F$ is the radiation flux, $\sigma$ is the absorption opacity, $c$ is the speed of light, $a$ is the black body constant, and $\rho$ is the material density.
$Q$ and $S$ are given source terms.
The material temperature $T_{\mathrm{mat}}$ is related to the internal energy $e$ through the specific heat $C_v$.
The energy unknowns $e$ and $E$ are taken in the $L^2$ finite element space $W_h$, and the radiation flux is taken in the $\Hdiv$ space $\Vh$.

For this test case, we use the manufactured solution proposed in \cite{Brunner2006}.
The equations are integrated in time implicitly; we use the three-stage $L$-stable singly diagonal implicit Runge--Kutta (SDIRK) method \cite{Alexander1977}.
The resulting nonlinear algebraic equations are solved using an iterative scheme as follows.
First, the radiation flux $F$ is lagged, and a coupled nonlinear system for the material energy $e$ and radiation energy $E$ is solved.
Then, the material energy $e$ is fixed, and a Darcy-type linear system is solved for the radiation energy $E$ and radiation flux $F$.
This procedure is repeated until the full nonlinear residual has converged to the desired tolerance;
for the time-accurate time steps considered presently, this method typically requires only one or two outer iterations to reach a relative tolerance of $10^{-12}$.

For fixed radiation flux $F$, the (inner) nonlinear equations corresponding to \eqref{eq:rad-diff-e} and \eqref{eq:rad-diff-E} are solved element-by-element using Newton's method.
The resulting Jacobian matrix takes the form of a $2\times2$ element-wise block diagonal system, which is well-preconditioned by a block Jacobi preconditioner with blocks of size $2\times2$.
After solving for the energies $e$ and $E$, the material energy $e$ is fixed, and the coupled linear equations for $E$ and $F$ corresponding to \eqref{eq:rad-diff-E} and \eqref{eq:rad-diff-F} are solved;
the solution of this system is often a bottleneck in radiation hydrodynamics simulations, emphasizing the importance of the development of efficient preconditioners \cite{Gonzalez2015,Langer2015}.
These equations take the form of a Darcy-type problem, which can be preconditioned using the block preconditioners described in this work.

The test problem in \cite{Brunner2006} is posed in spherical coordinates; presently, we solve this problem in Cartesian coordinates on a three-dimensional mesh with 884{,}736 hexahedral elements using polynomial degree $p=4$.
This problem configuration has $1.7\times10^8$ Raviart--Thomas degrees of freedom and $5.7\times10^7$ $L^2$ degrees of freedom, and is solved in parallel with 16 Nvidia V100 GPUs on four nodes of LLNL's \textit{Lassen}.
The equations are integrated until a final time of $0.1 / \tau \approx 4.39 \times 10^{-11}$, where $\tau \approx 2.28 \times 10^9$ is the decay constant of the test problem.
The time step is chosen to be $\Delta t = h / \tau$, and 10 implicit time steps (each with three Runge--Kutta stages) are taken.
Each stage requires an outer nonlinear solve (with relative tolerance $10^{-12}$), an inner nonlinear solve (with relative tolerance $10^{-8}$), and a linear solver (with relative tolerance $10^{-8}$).
At the final time, the relative $L^2$ error was $7.72\times10^{-8}$ in the material energy and $5.25\times10^{-7}$ in the radiation energy.
Each Runge--Kutta stage required two outer solver iterations and four inner Newton iterations.
In total, 60 saddle-point linear solves were performed, and on average the linear solver converged in 84 MINRES iterations.
The timing results and iteration counts are presented in the right panel of \Cref{fig:rad-diff}.
Consistent with results reported in \cite{Gonzalez2015}, approximately 92\% of the total runtime for this problem was spent in the linear solver.


\begin{figure}
   \centering
   \begin{minipage}{0.45\linewidth}
      \centering
      \raisebox{12pt}{\includegraphics[width=3cm]{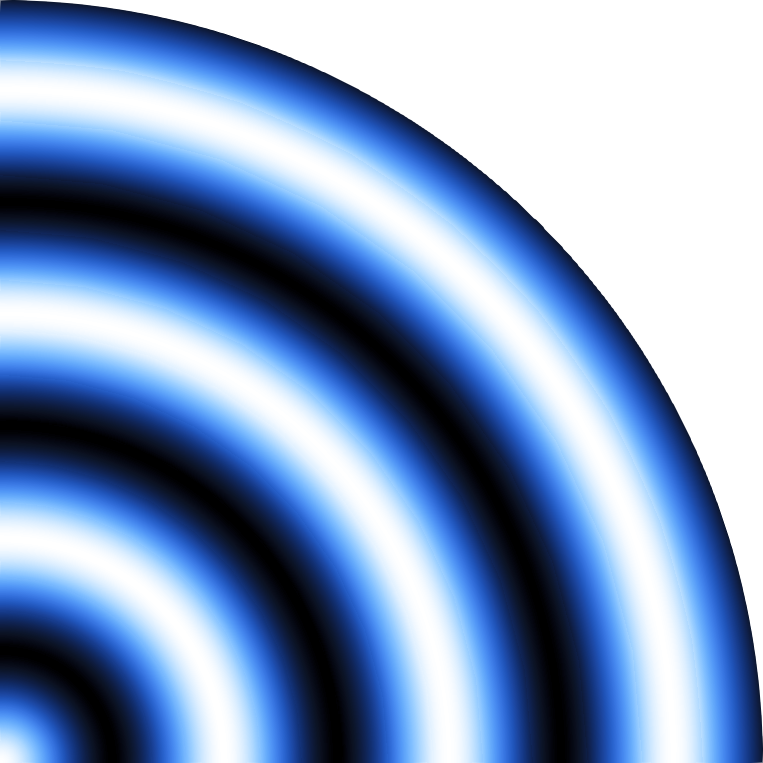}}
      \hspace{0.25cm}
      \includegraphics[scale=0.7]{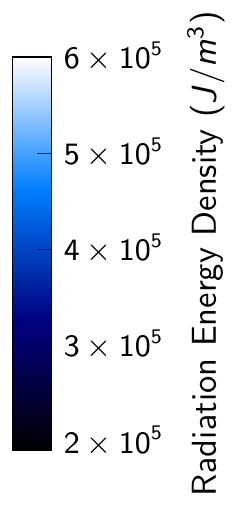}
   \end{minipage}%
   \begin{minipage}{0.5\linewidth}
      \centering
      \begin{tabular}{lSS}
         \toprule
         Alg.~Component & {Time (s)} & {\%} \\
         \midrule
         Nonlinear residual & 8.22 & 4.45\% \\
         Inner Newton solver & 7.15 & 3.87\% \\
         \multicolumn{3}{l}{\quad\small (2 iterations per solve)} \\
         Linear solver & 169.39 & 91.68\% \\
         \multicolumn{3}{l}{\quad\small (84 iterations per solve)} \\
         \bottomrule
      \end{tabular}
   \end{minipage}
   \caption{
      Left: snapshot of radiation energy density from radiation diffusion test case.
      Right: timing results and iteration counts on 16 V100 GPUs.
   }
   \label{fig:rad-diff}
\end{figure}

\section{Conclusions}
\label{sec:conclusions}

In this work, we have developed solvers for grad-div and Darcy-type problems in $\Hdiv$ based on block preconditioning for the associated saddle-point system.
Properties of the interpolation--histopolation high-order basis result in off-diagonal blocks with enhanced sparsity.
The Schur complement of the block system can be approximated by an M-matrix with sparsity independent of the polynomial degree;
such systems are amenable to preconditioning with scalable algebraic multigrid methods.
The $(1,1)$-block can be well-approximated by a diagonal matrix.
The resulting block preconditioners result in uniform convergence, independent of mesh size and polynomial degree.
These solvers are amenable to GPU acceleration; high-throughput algorithms are developed and benchmarked.
The performance of the solvers is demonstrated on several benchmark problems, including the ``crooked-pipe'' grad-div problem, the SPE10 benchmark from reservoir simulation, and nonlinear radiation-diffusion.

\section{Acknowledgements}

This work was performed under the auspices of the U.S.\ Department of Energy by Lawrence Livermore National Laboratory under Contract DE-AC52-07NA27344 (LLNL-JRNL-848035).
W.\ Pazner and Tz.\ Kolev were partially supported by the LLNL-LDRD Program under Project No.~20-ERD-002.

\ifsiam
   \small
   \bibliographystyle{siamplain}
   \bibliography{refs}
\else
   \printbibliography
\fi

\end{document}